\newtheorem{theorem}{Theorem}[section]
\newtheorem{lemma}[theorem]{Lemma}
\newtheorem{proposition}[theorem]{Proposition}
\theoremstyle{definition}
\newtheorem{definition}[theorem]{Definition}
\theoremstyle{remark}
\newtheorem{remark}[theorem]{Remark}
\numberwithin{equation}{section}
\begin{document}
\begin{center}
\large{\textbf{On Riemannian Poisson warped product space}}
\end{center}
\begin{center}
	{Buddhadev Pal* and Pankaj Kumar\footnote{Second author's work is funded by UGC, India in the form of JRF [1269/(SC)(CSIR-UGC NET DEC. 2016)].} }
\end{center}
\vskip 0.3cm
\vskip 0.3cm
\begin{center}
	Department of Mathematics, Institute of Science, Banaras Hindu University, Varanasi-221005, India.\\
	*Email Id: pal.buddha@gmail.com\\
	Email Id: pankaj.kumar14@bhu.ac.in
\end{center}
\vskip 0.5cm
\begin{center}
	\textbf{Abstract}
\end{center}
A formal treatment of Killing 1-form and 2-Killing 1-form on Riemannian Poisson manifold, Riemannian Poisson warped product space are presented. In this way, we obtain Bochner type results on compact Riemannian Poisson manifold, compact Riemannian Poisson warped product space for Killing 1-form and 2-Killing 1-form. Finally, we give the characterization of a 2-Killing 1-form on $(\mathbb{R}^2,g,\Pi)$.\\\\
\textbf{Key words :} Warped product, Killing 1-forms, Levi-Civita contravariant connection, Poisson structure, Riemannian Poisson manifold.
\section{Introduction}
To provide the example of Riemannian spaces having negative curvature Bishop and O'Neill \cite{lota} introduced the notion of warped space. From then on original and generalized forms of warped product spaces have been widely discussed by both mathematicians and physicists \cite{kim,kim1,kim2,rim,ab,qu,dob,ppb}.\par Let $(\tilde{M_1},\tilde{g_{1}})$ and $(\tilde{M_2},\tilde{g_{2}})$ are two pseudo-Riemannian manifolds with positive smooth function $f$ on $\tilde{M_1}$. Let $\pi_1 : \tilde{M_1} \times \tilde{M_2} \rightarrow \tilde{M_1}$ and $\pi_2 : \tilde{M_1} \times \tilde{M_2} \rightarrow \tilde{M_1}$ are the projections. The warped product $\tilde{M}=\tilde{M_1} \times_f \tilde{M_2}$ is the product manifold $\tilde{M_1} \times \tilde{M_2}$ endowed with the metric tensor $$\tilde{g}^f=\pi_1^*(\tilde{g_{1}})+ (f\circ\pi_1)^2\pi_2^*(\tilde{g_{2}}),$$
called warped product and the ordered-pair $(\tilde{M},\tilde{g}^f)$ known as warped product space. Here $\tilde{M_1}$, $\tilde{M_2}$ and $f$ are respectively known as base space, fiber space and warping function of the warped product space $(\tilde{M},\tilde{g}^f)$ and $^*$ stand for pull-back operator.\par
Killing vector fields are the relevant object for the geometry specially in pseudo-Riemannian geometry where mathematicians characterized the existence of Killing vector fields. Killing vector fields are also studied by many physicists in the prospective of general relativity in which these are expounded in the term of symmetry. Bochner \cite{sbo,sbo1,sbo2}, studied in detail Killing vector fields and provided various remarkable results. K. Yano \cite{ko1,ko2}, consider a compact orientable Riemannian spaces with boundary and generalized the Bochner technique to study Killing vector fields on it. S. Yorozu \cite{sz1,sz2}, discussed the non-existence of Killing vector fields on complete Riemannian spaces and also did the same for non-compact Riemannian spaces with boundary. Generalized form of Killing vector fields like conformal vector fields, 2-Killing vector fields have been investigated in \cite{sd1,sd2,sd3,kdr,wkh,shn}, for ambient spaces. T. Opera \cite{top}, introduced the perception of 2-Killing vector fields and provided the relation between curvature, monotone vector fields and 2-Killing vector fields on Riemannian spaces. Moreover, characterized the 2-Killing vector field on $\mathbb{R}^n$. S. Shenawy and B. $\ddot{U}nal$ \cite{ssb}, provided some results of 2-Killing vector field for warped product space and apply these results to characterize it on some famous warped space time model.
Z. Erjavec \cite{zec}, currently characterized proper conformal Killing vector fields and determine some proper 2-Killing vector fields in Sol space.\par
Poisson \cite{sdp}, introduced a bracket as a tool for classical dynamics and Lie \cite{lie}, explored the geometry of this bracket. In \cite{ivs,jpn}, authors adopted the Poisson structure and provided the notion of Poisson manifold. The geometric notions like connection, curvatures, metric etc., were discussed in \cite{rfl,mbo,mbo2,zs}, on Poisson manifold. In \cite{rnm}, authors formulated several concepts on product manifold like product Poisson tensor and product Riemannian metric . In \cite{yar}, authors discussed the some geometric notions like contravariant Levi-Civita connection, Riemann and Ricci curvatures on the product of two pseudo-Riemannian spaces which is associated with the product Poisson structure, warped bivector field.\par
The aim of this article is to provide the notions of Killing 1-form and 2-Killing form and try to study these two notions on Riemannian Poisson manifold and Riemannian Poisson warped product space.\par
The outline of this article is as follows. In Section 2, we look back on some classical notions like cometric, curvatures, contravariant Levi-Civita connection on Poisson manifold and give the definition of Riemannian Poisson manifold $(\tilde{M},g,\Pi)$. Moreover, we provide the explicit form of cometric $g^f$ and contravariant Levi-Civita connection $\mathcal{D}$ on $(M=\tilde{M_1}\times_f\tilde{M_2}, g^f)$. In Section 3, we characterize the Killing 1-form on Riemannian Poisson manifold and Riemannian Poisson warped product space $(M=\tilde{M_1}\times_f \tilde{M_2}, g^f,\Pi)$. Moreover, we introduce the concept of parallel 1-form and provide Bochner type results on compact Riemannian Poisson manifold and Riemannian Poisson warped product space in Theorems 3.10, 3.11. In Section 4, we study 2-Killing 1-first form and characterize 2-Killing 1-form on $\mathbb{R}^2$ in Theorem 4.8.
\section{Preliminaries}
\subsection{Geometric structure on Poisson manifold}
Lots of basic terms and consequence related to Poisson manifold presented in \cite{ivs}. Let $\tilde{M}$ be a manifold. A Lie bracket map $\{.,.\} : \mathcal{C}^\infty(\tilde{M})\times\mathcal{C}^\infty(\tilde{M})\rightarrow\mathcal{C}^\infty(\tilde{M})$
 is said to be Poisson bracket on $\tilde{M}$ if it follows the Leibniz identity i.e.,
\begin{equation*}
\{\phi_1,\phi_2\phi_3\}=\{\phi_1,\phi_2\}\phi_3+\phi_2\{\phi_1,\phi_3\},\qquad\forall\:\:\phi_1,\phi_2,\phi_3\in\mathcal{C}^\infty(\tilde{M}).
\end{equation*}
The pair $(\tilde{M},\{.,.\})$ is said to be Poisson manifold.\\
Let $(\tilde{M},\{.,.\})$ be Poisson manifold and $\phi_1\in\mathcal{C}^\infty(\tilde{M})$ then we can find a unique vector field $X_{\phi_1}$ on $\tilde{M}$ associate to $\phi_1$ such that 
\begin{equation*}
X_{\phi_1}(\phi_2)=\{\phi_2,\phi_1\},\qquad\forall\:\:\phi_2\in\mathcal{C}^\infty(\tilde{M}).
\end{equation*}
The vector field $X_{\phi_1}$ is said to be Hamiltonian vector field of the function $\phi_1$. If
\begin{equation*}
X_{\phi_1}(\phi_2)=0,\qquad\forall\:\:\phi_2\in\mathcal{C}^\infty(\tilde{M}),
\end{equation*}
then $\phi_1\in\mathcal{C}^\infty(\tilde{M})$ is called Casimir function. The Leibniz identity also guarantee the existence of a bivector field $\Pi\in\mathfrak{X}^2(\tilde{M})=\Gamma(\Lambda^2T\tilde{M})$ such that
\begin{equation*}
\{\phi_1,\phi_2\}=\Pi(d\phi_1,d\phi_2),\qquad\forall\:\:\phi_1,\phi_2\in\mathcal{C}^\infty(\tilde{M}).
\end{equation*}
A bracket $[.,.]_S$ on $(\tilde{M},\{.,.\})$ considered to be Schouten bracket associated with bivector field $\Pi$ if
\begin{equation*}
\frac{1}{2}[\Pi,\Pi]_S(d\phi_1,d\phi_2,d\phi_3)=\{\{\phi_1,\phi_2\},\phi_3\}+\{\{\phi_2,\phi_3\},\phi_1\}+\{\{\phi_3,\phi_1\},\psi_2\}.
\end{equation*}
A bivector field $\Pi$ on $\tilde{M}$ is called Poisson tensor if $[\Pi,\Pi]_S=0$.\\\\
\textbf{Remark:} Many authors assume $(\tilde{M},\Pi)$ as a Poisson manifold when $\Pi$ is a Poisson tensor. Here, we consider same notion.\\\\
Let us assume that if $\tilde{M}$ is a manifold with a bivector field $\Pi$ then there is a natural homomorphism $\sharp_{\Pi}:T^*\tilde{M}\rightarrow TM$ corresponding to $\Pi$ given by
\begin{equation*}
\eta(\sharp_{\Pi}(\omega))=\Pi(\omega,\eta),\qquad\forall\:\:\omega,\eta\in T^*\tilde{M},
\end{equation*}
called  sharp map(anchor map).\par
If $\Pi$ is a bivector field on $\tilde{M}$, then it give rise to a bracket $[.,.]_\Pi$ on smooth 1-forms $\Gamma(T^*\tilde{M})$ is said to be Koszual bracket defined by
\begin{equation*}
[\omega,\eta]_\Pi=\mathcal{L}_{\sharp_{\Pi}(\omega)}\eta-\mathcal{L}_{\sharp_{\Pi}(\eta)}\omega-d(\Pi(\omega,\eta)).
\end{equation*}\par
Let $(\tilde{M},\Pi)$ be a Poisson manifold where $\Pi$ is a Poisson tensor on $\tilde{M}$ then Koszual bracket $[.,.]_\Pi$ convert into usual Lie bracket.\par
If $\sharp_{\Pi}$ is the sharp map on Poisson manifold $(\tilde{M},\Pi)$, then there is a Lie algebra homomorphism $\sharp_{\Pi}:\Gamma(T^*\tilde{M})\rightarrow \Gamma(T\tilde{M})$, such that
\begin{equation*}
\sharp_{\Pi}([\omega,\eta]_\Pi)=[\sharp_{\Pi}(\omega),\sharp_{\Pi}(\eta)],
\end{equation*}
where $[.,.]$ is the usual Lie bracket on $\Gamma(T\tilde{M})$.\\\\
Let $(\tilde{M},\Pi)$ be a Poisson manifold. In \cite{rfl}, authors introduced the concept of contravariant connection $\mathcal{D}$ on $\tilde{M}$. The torsion and curvature tensors corresponding to this connection $\mathcal{D}$ are given by
\begin{align*}
\mathcal{T}(\omega,\eta)&=\mathcal{D}_\omega\eta-\mathcal{D}_\eta\omega-[\omega,\eta]_\Pi,\\
\mathcal{R}(\omega,\eta)\gamma&=\mathcal{D}_\omega\mathcal{D}_\eta\gamma-\mathcal{D}_\eta\mathcal{D}_\omega\gamma-\mathcal{D}_{[\omega,\eta]_\Pi}\gamma,
\end{align*}
where $\mathcal{T}$ is $(2,1)$-type tensor and $\mathcal{R}$ is $(3,1)$-type tensor. Here $\mathcal{D}$ is said to be torsion-free if $\mathcal{T}=0$ and flat if $\mathcal{R}=0.$\\\\
Let $(M,\tilde{g})$ be a pseudo-Riemannian manifold. The bundle isomorphism\\ $\flat_{\tilde{g}}:TM\rightarrow T^*M$ is a map such that $X\mapsto \tilde{g}(X,.)$ and its inverse map
\begin{align*}
\sharp_{\tilde{g}}:&T^*M\rightarrow TM\\
&\omega\mapsto \sharp_{\tilde{g}}(\omega)
\end{align*} 
such that $\omega(X)=\tilde{g}(\sharp_{\tilde{g}}(\omega),X)$. The metric $g$ on the cotangent bundle $T^*M$ is defined by
\begin{equation*}
g(\omega,\eta)=\tilde{g}(\sharp_{\tilde{g}}(\omega),\sharp_{\tilde{g}}(\eta)).
\end{equation*}
This metric $g$ is said to be cometric of the metric $\tilde{g}$.\par
Let $(\tilde{M},\Pi)$ be a Poisson manifold and $g$ is cometric then there exists a unique contravariant connection $\mathcal{D}$ on $\tilde{M}$ characterized by
\begin{align}
2g(\mathcal{D}_\omega\eta,\gamma)&=\sharp_{\Pi}(\omega)g(\eta,\gamma)+\sharp_{\Pi}(\eta)g(\omega,\gamma)-\sharp_{\Pi}(\gamma)g(\omega,\eta)\nonumber\\
&+g([\omega,\eta]_\Pi,\gamma)-g([\eta,\gamma]_\Pi,\omega)+g([\gamma,\omega]_\Pi,\eta),
\end{align}
for any $\omega,\eta,\gamma\in\Omega^1(\tilde{M})$,
and follows the following two conditions
\begin{align*}
(\textbf{i}).&\:\mathcal{D}_\omega\eta-\mathcal{D}_\eta\omega=[\omega,\eta]_\Pi\text{(Torsion-free)},\\
(\textbf{ii}).&\:\sharp_{\Pi}(\omega)g(\eta,\gamma)=g(\mathcal{D}_\omega\eta,\gamma)+g(\eta,\mathcal{D}_\omega\gamma)\text{(Metric condition)},
\end{align*}
for any $\omega,\eta,\gamma\in\Omega^1(\tilde{M})$. Contravariant connection $\mathcal{D}$ with properties $(i)$ and $(ii)$ is said to be contravariant Levi-Civita connection associated to pair $(\Pi,g)$ on $\tilde{M}$.\par 
Let $(\tilde{M},\Pi)$ be a $n$-dimensional Poisson manifold with connection $\mathcal{D}$ and $p$ is any point on $\tilde{M}$. The Ricci curvature $Ric_p$ and scalar curvature at $p$ corresponding to the local orthonormal coframe $\{\theta_1,...,\theta_n\}$ of $T_p^*\tilde{M}$, given by
\begin{align}
Ric_p(\omega,\eta)&=\displaystyle\sum_{i=1}^{n}g_p(\mathcal{R}_p(\omega,\theta_i)\theta_i,\eta),\\
S_p&=\displaystyle\sum_{i=1}^{n}Ric_p(\theta_i,\theta_i),
\end{align}
for any $\omega,\eta\in T_p^*\tilde{M}$.\\\\
Let $(\tilde{M},\Pi)$ be a Poisson manifold with connection $\mathcal{D}$ and $f\in\mathcal{C}^\infty(\tilde{M})$ then $\mathcal{D}f=df\circ\sharp_\Pi\in\mathfrak{X}^1(\tilde{M})$, defined by
\begin{equation*}
(\mathcal{D}f)(\omega)=\mathcal{D}_\omega f=\sharp_\Pi(\omega)(f)=df(\sharp_\Pi(\omega)),
\end{equation*}
for any $\omega\in\Omega^1(\tilde{M}).$\\\\
Let $(\tilde{M},\Pi)$ be a Poisson manifold with connection $\mathcal{D}$ satisfies $\mathcal{D}\Pi=0$ i.e.,
\begin{equation*}
\sharp_{\Pi}(\omega)\Pi(\eta,\gamma)-\Pi(\mathcal{D}_\omega\eta,\gamma)-\Pi(\eta,\mathcal{D}_\omega\gamma)=0,
\end{equation*}
for any $\omega,\eta,\gamma\in\Omega^1(M)$, then triplet $(\tilde{M},g,\Pi)$ called Riemannian Poisson manifold.\par
Let $(\tilde{M},\Pi)$ be a Poisson manifold with cometric $g$, then field endomorphism $$J:T^*\tilde{M}\rightarrow T^*\tilde{M}$$ provides 
\begin{equation*}
\Pi(\omega,\eta)=g(J\omega,\eta)=-g(\omega,J\eta),
\end{equation*}
for any $\omega,\eta\in T^*\tilde{M}.$\par
Let $(\tilde{M},g,\Pi)$ be a Riemannian Poisson manifold and $J$ is a field endomorphism on $\tilde{M}$ then $\mathcal{D}J=0$ i.e,
\begin{equation*}
\mathcal{D}_\omega(J\eta)=J\mathcal{D}_\omega\eta,
\end{equation*}
for any $\omega,\eta\in T^*\tilde{M}.$
\subsection{Cometric and contravariant Levi-Civita connection on warped product space}
The explicit form of the warped metric
\begin{equation*}
\tilde{g}^f=\pi^*(\tilde{g}_1)+(f^h)^2\sigma^*(\tilde{g}_2),
\end{equation*} on $(\tilde{M_1},\tilde{g}_1)$ and $(\tilde{M_2},\tilde{g}_2)$ is given by
\begin{eqnarray}
\left\{
\begin{array}{ll}
\tilde{g}^f(X_1^h,Y_1^h)=\tilde{g}_1(X_1,Y_1)^h,\\
\tilde{g}^f(X_1^h,Y_2^v)=\tilde{g}_1(X_2^v,Y_1^h)=0,\\
\tilde{g}^f(X_2^v,Y_2^v)=(f^h)^2\tilde{g}_2(X_2,Y_2)^v,
\end{array}
\right.
\end{eqnarray}
for any $X_1,Y_1\in\Gamma(T\tilde{M_1})$ and $X_2,Y_2\in\Gamma(T\tilde{M_2})$. Here $f\circ\pi=f^h$ is horizontal lift of $f$ from $\tilde{M_2}$ to $\tilde{M_1}\times \tilde{M_2}$. For more detail of horizontal and vertical lifts on product space see in (\cite{rnm,ddo,lot}).\\
As a consequence of the Proposition 3.3 of (\cite{ddo},p. 23), one has the following proposition which provides explicit form to the cometric
\begin{equation*}
g^f=g_1^h+\frac{1}{(f^h)^2}g_2^v,
\end{equation*}
 of warped metric $\tilde{g}^f$.
\begin{proposition}
Let two pseudo-Riemannian manifolds be  $(\tilde{M_1},\tilde{g}_1)$ and $(\tilde{M_2},\tilde{g}_2)$ and a smooth function be $f:\tilde{M_1}\rightarrow\mathbb{R}^+$. Then cometric $g^f$ of the metric $\tilde{g}^f$ is explicitly can be written as
\begin{eqnarray}
\left\{
\begin{array}{ll}
g^f(\omega_1^h,\eta_1^h)=g_1(\omega_1,\eta_1)^h,\\
g^f(\omega_1^h,\eta_2^v)=g_1(\omega_2^v,\eta_1^h)=0,\\
g^f(\omega_2^v,\eta_2^v)=\frac{1}{(f^h)^2}g_2(\omega_2,\eta_2)^v,
\end{array}
\right.
\end{eqnarray} 
for any $\omega_1,\eta_1\in\Gamma(T^*\tilde{M_1})$ and $\omega_2,\eta_2\in\Gamma(T^*\tilde{M_2})$. Where $g_1$ and $g_2$ are the cometric of the metric $\tilde{g}_1$ and $\tilde{g}_2$ respectively.
\end{proposition}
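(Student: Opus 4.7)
The plan is to work through the definition of the cometric, namely $g^f(\omega,\eta)=\tilde{g}^f(\sharp_{\tilde{g}^f}(\omega),\sharp_{\tilde{g}^f}(\eta))$, by first describing the musical isomorphism $\sharp_{\tilde{g}^f}$ explicitly on horizontal and vertical lifts of 1-forms, and then reducing every inner product back to one of the three cases listed in (2.4). Since the cometric is $\mathcal{C}^\infty(\tilde{M})$-bilinear and any 1-form on $\tilde{M}_1\times\tilde{M}_2$ can be written locally as a finite combination of $\omega_1^h$'s and $\omega_2^v$'s, it suffices to verify the three identities on such lifts.

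First I would establish the key identity $\sharp_{\tilde{g}^f}(\omega_1^h)=(\sharp_{\tilde{g}_1}(\omega_1))^h$. To see this, let $X_1=\sharp_{\tilde{g}_1}(\omega_1)$ and test $X_1^h$ against arbitrary $Y_1^h$ and $Y_2^v$ using (2.3): the first pairing gives $\tilde{g}_1(X_1,Y_1)^h=\omega_1(Y_1)^h=\omega_1^h(Y_1^h)$, while the pairing with any vertical vector vanishes, matching $\omega_1^h(Y_2^v)=0$. Uniqueness of the sharp map then pins down $\sharp_{\tilde{g}^f}(\omega_1^h)$. Analogously, I would check that $\sharp_{\tilde{g}^f}(\omega_2^v)=\tfrac{1}{(f^h)^2}(\sharp_{\tilde{g}_2}(\omega_2))^v$; the factor $(f^h)^{-2}$ is forced so that the pairing with $Y_2^v$ yields $(f^h)^2\cdot(f^h)^{-2}\tilde{g}_2(X_2,Y_2)^v=\omega_2^v(Y_2^v)$, while the pairing with $Y_1^h$ is automatically zero.

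Once the sharp formulas are in hand, the three cases follow by a direct substitution into $g^f(\omega,\eta)=\tilde{g}^f(\sharp_{\tilde{g}^f}\omega,\sharp_{\tilde{g}^f}\eta)$ and a single application of (2.3). For two horizontal lifts one obtains $\tilde{g}_1(\sharp_{\tilde{g}_1}\omega_1,\sharp_{\tilde{g}_1}\eta_1)^h=g_1(\omega_1,\eta_1)^h$. For a horizontal and a vertical lift, one sharp image is horizontal and the other is a scalar multiple of a vertical lift, so (2.3) forces the product to vanish. For two vertical lifts the factor $(f^h)^{-4}$ from the two sharp maps combines with the $(f^h)^2$ in (2.3) to give exactly $(f^h)^{-2}g_2(\omega_2,\eta_2)^v$.

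There is no real obstacle: the entire argument is a careful bookkeeping exercise, and the only step where one must pay attention is choosing the correct conformal factor $(f^h)^{-2}$ when lifting the sharp map on $\tilde{M}_2$, so that compatibility with the warping factor in the vertical-vertical component of $\tilde{g}^f$ is preserved. Everything else is dictated by the non-degeneracy of $\tilde{g}^f$ and the vanishing of the mixed horizontal-vertical blocks.
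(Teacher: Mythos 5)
Your proof is correct and is exactly the natural argument: compute $\sharp_{\tilde{g}^f}(\omega_1^h)=(\sharp_{\tilde{g}_1}\omega_1)^h$ and $\sharp_{\tilde{g}^f}(\omega_2^v)=\tfrac{1}{(f^h)^2}(\sharp_{\tilde{g}_2}\omega_2)^v$ from the block structure of $\tilde{g}^f$ in $(2.4)$, then substitute into $g^f(\omega,\eta)=\tilde{g}^f(\sharp_{\tilde{g}^f}\omega,\sharp_{\tilde{g}^f}\eta)$; the conformal factors combine as you say. The paper itself gives no proof, deriving the statement as a consequence of a cited proposition, so your write-up simply supplies the details that the authors omit.
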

The ordered pair $(\tilde{M}=\tilde{M_1}\times_f \tilde{M_2}, g^f)$ is said to be contravariant warped product space of warped space $(\tilde{M}=\tilde{M_1}\times_f \tilde{M_2}, \tilde{g}^f)$.\\\par
Contravariant Levi-Civita connection $\mathcal{D}$ associated with pair $(g^f,\Pi)$ $(\text{where}\  g^f=g_1^h+\frac{1}{(f^h)^2}g_2^v\ \text{and}\ \Pi=\Pi_1+\Pi_2)$ on $(\tilde{M}=\tilde{M_1}\times_f \tilde{M_2}, g^f)$ is given by proposition:
\begin{proposition}
	For any $\omega_1,\eta_1\in\Gamma(T^*\tilde{M_1})$ and $\omega_2,\eta_2\in\Gamma(T^*\tilde{M_2})$, we have
	\begin{align*}
	\textbf{(i)}.\:\mathcal{D}_{\omega_1^h}\beta_1^h&=(\mathcal{D}_{\omega_1}^1\eta_1)^h,\\
	\textbf{(ii)}.\:\mathcal{D}_{\omega_2^v}\eta_2^v&=(\mathcal{D}_{\omega_2}^2\eta_2)^v-\frac{1}{(f^h)^3}g_2(\omega_2,\eta_2)^v(J_1df)^h,\\
	\textbf{(iii)}.\:\mathcal{D}_{\omega_1^h}\eta_2^v&=\mathcal{D}_{\eta_2^v}\omega_1^h=\frac{1}{f^h}g_1(J_1df,\omega_1)^h\eta_2^v.
	\end{align*}
\end{proposition}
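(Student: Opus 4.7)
The plan is to verify each of the three formulas by substituting the proposed expression into the Koszul-type characterization (2.1) and invoking uniqueness of the torsion-free metric contravariant connection associated with $(g^f,\Pi)$. Concretely, for each candidate expression I would check the identity
\begin{equation*}
2g^f(\mathcal{D}_\omega\eta,\gamma)=\sharp_\Pi(\omega)g^f(\eta,\gamma)+\sharp_\Pi(\eta)g^f(\omega,\gamma)-\sharp_\Pi(\gamma)g^f(\omega,\eta)+\cdots
\end{equation*}
against every test covector $\gamma$ of pure type $\gamma_1^h$ or $\gamma_2^v$; non-degeneracy of $g^f$ then forces equality with $\mathcal{D}_\omega\eta$.

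Before splitting into cases, I would record three preparatory facts. First, since $\Pi=\Pi_1+\Pi_2$ with the summands horizontal and vertical respectively, the sharp map decomposes as $\sharp_\Pi(\omega_1^h)=(\sharp_{\Pi_1}\omega_1)^h$ and $\sharp_\Pi(\omega_2^v)=(\sharp_{\Pi_2}\omega_2)^v$. Second, the Koszul brackets on lifts obey $[\omega_1^h,\eta_1^h]_\Pi=([\omega_1,\eta_1]_{\Pi_1})^h$ and $[\omega_2^v,\eta_2^v]_\Pi=([\omega_2,\eta_2]_{\Pi_2})^v$, while the mixed bracket vanishes: $\Pi(\omega_1^h,\eta_2^v)=0$ because each summand of $\Pi$ annihilates a mixed pair, and both $\mathcal{L}_{\sharp_\Pi(\omega_1^h)}\eta_2^v$ and $\mathcal{L}_{\sharp_\Pi(\eta_2^v)}\omega_1^h$ vanish by Cartan's formula since horizontal vector fields annihilate vertical-lifted forms and vice versa. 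Third, by the definition of $J_1$ and the anchor, $df(\sharp_{\Pi_1}\omega_1)=\Pi_1(\omega_1,df)=-g_1(\omega_1,J_1 df)$.

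With these tools the three cases become mechanical. For (i), testing against $\gamma_1^h$ every entry of the Koszul formula reduces to its horizontal lift from $\tilde{M}_1$ (horizontal vectors preserve horizontal-lifted functions and $g^f$ on horizontal pairs is the lift of $g_1$), and one recovers $2g_1(\mathcal{D}^1_{\omega_1}\eta_1,\gamma_1)^h$; testing against $\gamma_2^v$ all six terms vanish. For (iii), testing against $\gamma_1^h$ the formula collapses to zero, and against $\gamma_2^v$ only $\sharp_\Pi(\omega_1^h)g^f(\eta_2^v,\gamma_2^v)$ survives, producing $(\sharp_{\Pi_1}\omega_1)^h[(f^h)^{-2}]\,g_2(\eta_2,\gamma_2)^v$, which by the third fact equals $\tfrac{2}{(f^h)^3}g_1(J_1 df,\omega_1)^h g_2(\eta_2,\gamma_2)^v$, matching $2g^f\bigl(\tfrac{1}{f^h}g_1(J_1 df,\omega_1)^h\eta_2^v,\gamma_2^v\bigr)$. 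For (ii), testing against $\gamma_1^h$ only $-\sharp_\Pi(\gamma_1^h)g^f(\omega_2^v,\eta_2^v)$ survives and reproduces the $-\tfrac{1}{(f^h)^3}g_2(\omega_2,\eta_2)^v(J_1 df)^h$ correction; testing against $\gamma_2^v$ the $(J_1 df)^h$ piece drops on horizontality grounds, the factor $(f^h)^{-2}$ passes through every vertical Lie derivative, and the residue is exactly $\tfrac{1}{(f^h)^2}$ times the Koszul formula on the fiber, yielding $(\mathcal{D}^2_{\omega_2}\eta_2)^v$.

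The main technical obstacle is the clean identification of the Koszul brackets on horizontal and vertical lifts, particularly the vanishing of the mixed bracket, together with careful tracking of $\sharp_\Pi(\gamma_1^h)$ acting on the quotient $g_2(\omega_2,\eta_2)^v/(f^h)^2$; once that derivative is rewritten through $J_1 df$ via the third preparatory fact, the remainder is routine bookkeeping with the warping factor.
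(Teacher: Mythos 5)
Your verification is correct: the decomposition of $\sharp_\Pi$ and of the Koszul bracket on lifts (in particular the vanishing of the mixed bracket, which also gives $\mathcal{D}_{\omega_1^h}\eta_2^v=\mathcal{D}_{\eta_2^v}\omega_1^h$ by torsion-freeness), the sign bookkeeping $df(\sharp_{\Pi_1}\omega_1)=-g_1(\omega_1,J_1df)$, and the case-by-case evaluation of formula $(2.1)$ against pure-type test covectors all check out and reproduce the three stated identities. The paper itself supplies no proof of this proposition --- it is imported from the cited work of Ait Amrane, Nasri and Zeglaoui on warped Poisson brackets --- and your Koszul-formula computation is precisely the standard derivation one would find there, so your argument fills in the omitted verification rather than diverging from it.
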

If we assume that, $\Pi=\Pi_1+\Pi_2$ in Theorem 5.2 of (\cite{yar},p. 294) then we conclude that:
\begin{theorem}
	Let $f$ be a Casimir function. Then both $(\tilde{M_1},g_1,\Pi_1)$ and $(\tilde{M_2},g_2,\Pi_2)$ are Riemannian Poisson manifolds if and only if the triplet $(\tilde{M}=\tilde{M_1}\times_f \tilde{M_2},g^f,\Pi)$ is a Riemannian Poisson warped product space.
\end{theorem}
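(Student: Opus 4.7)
The plan is to verify the Riemannian Poisson condition $\mathcal{D}\Pi = 0$ on the warped product directly, by a case analysis on whether the test 1-forms are horizontal or vertical lifts, using the explicit formulas of Proposition 2.2.

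First I would exploit the Casimir hypothesis. Since $f\in\mathcal{C}^\infty(\tilde{M}_1)$ is Casimir with respect to $\Pi=\Pi_1+\Pi_2$, the Hamiltonian vector field of $f$ vanishes, and because $df$ is a purely horizontal 1-form, this forces $\sharp_{\Pi_1}(df)=0$ on $\tilde{M}_1$. By the defining identity $\Pi_1(\omega,\eta)=g_1(J_1\omega,\eta)$ with $g_1$ non-degenerate, this in turn yields $J_1 df=0$. Substituting $J_1 df=0$ into Proposition 2.2 collapses the contravariant Levi-Civita connection on the product into the direct-sum form
\begin{align*}
\mathcal{D}_{\omega_1^h}\eta_1^h &= (\mathcal{D}^1_{\omega_1}\eta_1)^h,\\
\mathcal{D}_{\omega_2^v}\eta_2^v &= (\mathcal{D}^2_{\omega_2}\eta_2)^v,\\
\mathcal{D}_{\omega_1^h}\eta_2^v &= \mathcal{D}_{\eta_2^v}\omega_1^h = 0.
\end{align*}

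Next, I would expand $(\mathcal{D}_\omega\Pi)(\eta,\gamma)=\sharp_\Pi(\omega)\Pi(\eta,\gamma)-\Pi(\mathcal{D}_\omega\eta,\gamma)-\Pi(\eta,\mathcal{D}_\omega\gamma)$ on each ordered triple of horizontal/vertical lifts. Using the splittings $\Pi(\omega_1^h,\eta_1^h)=\Pi_1(\omega_1,\eta_1)^h$, $\Pi(\omega_2^v,\eta_2^v)=\Pi_2(\omega_2,\eta_2)^v$, $\Pi(\omega_1^h,\eta_2^v)=0$, together with $\sharp_\Pi(\omega_1^h)=\sharp_{\Pi_1}(\omega_1)^h$ and $\sharp_\Pi(\omega_2^v)=\sharp_{\Pi_2}(\omega_2)^v$, the purely horizontal case reduces exactly to $[(\mathcal{D}^1_{\omega_1}\Pi_1)(\eta_1,\gamma_1)]^h$ and the purely vertical case to $[(\mathcal{D}^2_{\omega_2}\Pi_2)(\eta_2,\gamma_2)]^v$. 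Each of the remaining mixed triples collapses to $0$: the cross derivatives $\mathcal{D}_{\omega_1^h}\eta_2^v$ and $\mathcal{D}_{\eta_2^v}\omega_1^h$ vanish by the simplified Proposition 2.2, $\Pi$ pairs horizontal with vertical trivially, and the Hamiltonian-type vector fields $\sharp_{\Pi_1}(\omega_1)^h$ (resp. $\sharp_{\Pi_2}(\omega_2)^v$) annihilate the vertically (resp. horizontally) lifted functions they would be asked to differentiate. Assembling the cases, $\mathcal{D}\Pi=0$ on $(\tilde{M},g^f,\Pi)$ is equivalent to the simultaneous vanishing of $\mathcal{D}^1\Pi_1$ on $\tilde{M}_1$ and $\mathcal{D}^2\Pi_2$ on $\tilde{M}_2$, which gives both implications (the fact that $\Pi=\Pi_1+\Pi_2$ is actually a Poisson tensor on $\tilde{M}$ whenever $\Pi_1$ and $\Pi_2$ are comes from Theorem 5.2 of the cited reference, so only the covariant-constancy of $\Pi$ must be checked).

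The main obstacle I expect is the bookkeeping of horizontal and vertical lifts: one must verify rigorously that $\sharp_\Pi(\omega_1^h)$ acts on a vertically lifted function $\Pi_2(\eta_2,\gamma_2)^v$ as zero (because $\sharp_{\Pi_1}(\omega_1)$ is tangent to $\tilde{M}_1$ while $\Pi_2(\eta_2,\gamma_2)^v$ is constant along the base), and symmetrically for the vertical sharp acting on horizontal functions. Without this decoupling the mixed cases would not vanish, and the equivalence would fail; this is precisely where the Casimir hypothesis (via $J_1 df=0$) does its work, since otherwise the $\tfrac{1}{f^h}g_1(J_1df,\omega_1)^h\eta_2^v$ term in Proposition 2.2 would couple the two factors through $\Pi$.
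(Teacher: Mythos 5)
Your proposal is correct, but it takes a different route from the paper: the paper offers no computation at all for this statement, obtaining it purely as the specialization $\Pi=\Pi_1+\Pi_2$ of Theorem 5.2 of the cited reference of Ait Amrane, Nasri and Zeglaoui on warped Poisson brackets, whereas you verify the defining condition $\mathcal{D}\Pi=0$ directly. Your argument is sound: the Casimir hypothesis gives $J_1df=0$ exactly as the paper itself uses in Propositions 3.2 and 3.5 (via $\Pi_1(df,\cdot)=g_1(J_1df,\cdot)$ and non-degeneracy of $g_1$), this collapses Proposition 2.2 to the direct-sum connection, and then the purely horizontal and purely vertical triples reproduce $[(\mathcal{D}^1_{\omega_1}\Pi_1)(\eta_1,\gamma_1)]^h$ and $[(\mathcal{D}^2_{\omega_2}\Pi_2)(\eta_2,\gamma_2)]^v$ while all mixed triples vanish for the reasons you give (horizontal vector fields annihilate vertically lifted functions and vice versa, and $\Pi$ does not pair the two factors). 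Since lifted $1$-forms locally span $\Omega^1(\tilde M_1\times \tilde M_2)$ and $(\mathcal{D}_\omega\Pi)(\eta,\gamma)$ is tensorial in all three slots, checking on lifts suffices, and the vanishing of a horizontal (resp.\ vertical) lift of a function forces the vanishing of the function itself, which gives the converse direction; you also correctly flag that the Poisson condition $[\Pi,\Pi]_S=0$ decouples into the two factors. What your approach buys is a self-contained proof that makes visible exactly where the Casimir hypothesis enters (killing the coupling term $\tfrac{1}{f^h}g_1(J_1df,\omega_1)^h\eta_2^v$); what the paper's citation buys is coverage of the more general warped bivector treated in the reference, of which this theorem is only the product-bivector special case.
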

\section{Killing 1-form}
Let $(\tilde{M},g,\Pi)$ be a Riemannian Poisson manifold. In ( \cite{rfl},p. 5), author define the Lie derivative on the space of $k$-vector fields $\mathfrak{X}^k(\tilde{M})=\Gamma(\wedge^kT\tilde{M})$. Let $T\in\mathfrak{X}^k(\tilde{M})$, then the Lie derivative of $T$ in the direction of 1-form $\alpha\in\Omega^1(\tilde{M})$ is a map $\mathcal{L}_\alpha:\mathfrak{X}^k(\tilde{M})\rightarrow\mathfrak{X}^k(\tilde{M})$ such that
\begin{equation}
(\mathcal{L}_\alpha T)(\alpha_1,...,\alpha_k)=\sharp_\Pi(\alpha)(T(\alpha_1,...,\alpha_k))-\sum_{i=1}^{k}T(\alpha_1,...,[\alpha,\alpha_i]_\Pi,...,\alpha_k),
\end{equation} 
where $\alpha_1,...,\alpha_k\in\Omega^1(M)$.\par 
A 1-form $\eta\in\Omega^1(\tilde{M})$ on $(\tilde{M},g,\Pi)$ is said to be Killing 1-form corresponding to the cometric $g$ if 
$$\mathcal{L}_\eta g=0,$$
where $\mathcal{L}_\eta$ is Lie derivative on $\tilde{M}$ with respect to 1-form $\eta$.\par
In the following two propositions, we will find the expression of Lie derivative $\mathcal{L}_\eta$ with respect to cometric $g^f$ on contravariant warped product space $(\tilde{M}=\tilde{M_1}\times_f \tilde{M_2},g^f)$ and Riemannian Poisson warped product space $(M=\tilde{M_1}\times_f \tilde{M_2},g^f,\Pi)$. Here we will consider $\eta=\eta_1^h+\eta_2^v$, $\alpha=\alpha_1^h+\alpha_2^v$ and $\beta=\beta_1^h+\beta_2^v$.
\begin{proposition} Let $(\tilde{M}=\tilde{M_1}\times_f \tilde{M_2},g^f)$ be a contravariant warped product space and $\mathcal{D}$ is the contravariant Levi-Civita connection associated with pair $(g^f,\Pi)$ on $\tilde{M}$. Then for any $\eta\in\Omega^1(\tilde{M})$, we have
	\begin{align*}
	(\mathcal{L}_{\eta}g^f)(\alpha,\beta)&=\big[(\mathcal{L}_{\eta_1}^1g_1)(\alpha_1,\beta_1)\big]^h\\
	&+\frac{1}{(f^h)^2}\big[(\mathcal{L}_{\eta_2}^2g_2)(\alpha_2,\beta_2)\big]^v+\Big(\frac{g_1(J_1df,\eta_1)}{f^3}\Big)^hg_2(\alpha_2,\beta_2)^v,
	\end{align*}	
	for any $\alpha,\beta\in\Omega^1(\tilde{M})$.
\end{proposition}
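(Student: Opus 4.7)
The plan is to first convert $\mathcal{L}_\eta g^f$ into an expression involving only the connection $\mathcal{D}$, and then to expand every factor according to horizontal/vertical components so that the three summands of the claim can be read off from the block structure of $g^f$ and the explicit formulae for $\mathcal{D}$.

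To carry out the conversion, I would substitute into the definition (3.1) of $\mathcal{L}_\eta g^f$ both the metric condition $\sharp_\Pi(\eta)(g^f(\alpha,\beta))=g^f(\mathcal{D}_\eta\alpha,\beta)+g^f(\alpha,\mathcal{D}_\eta\beta)$ and the torsion-free identity $[\eta,\alpha]_\Pi=\mathcal{D}_\eta\alpha-\mathcal{D}_\alpha\eta$. The $\sharp_\Pi(\eta)$ terms then cancel, leaving the clean working formula
\[
(\mathcal{L}_\eta g^f)(\alpha,\beta)=g^f(\mathcal{D}_\alpha\eta,\beta)+g^f(\mathcal{D}_\beta\eta,\alpha).
\]

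Next, I would insert the decompositions $\eta=\eta_1^h+\eta_2^v$, $\alpha=\alpha_1^h+\alpha_2^v$ and expand $\mathcal{D}_\alpha\eta$ bilinearly into its four pieces; each is given explicitly by Proposition 2.2, producing a purely horizontal contribution $(\mathcal{D}^1_{\alpha_1}\eta_1)^h$, a purely vertical contribution $(\mathcal{D}^2_{\alpha_2}\eta_2)^v$, a horizontal correction along $(J_1 df)^h$ weighted by $-g_2(\alpha_2,\eta_2)^v/(f^h)^3$, and two mixed vertical pieces proportional to $g_1(J_1 df,\alpha_1)^h \eta_2^v$ and $g_1(J_1 df,\eta_1)^h \alpha_2^v$. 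Pairing these against $\beta=\beta_1^h+\beta_2^v$ through Proposition 2.1 kills every horizontal--vertical cross pairing and weights the vertical--vertical pairings by $1/(f^h)^2$; this yields a short list of surviving summands for $g^f(\mathcal{D}_\alpha\eta,\beta)$, and a symmetric list is obtained for $g^f(\mathcal{D}_\beta\eta,\alpha)$ by swapping $\alpha\leftrightarrow\beta$.

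The final step is to add the two lists and track cancellations. The two horizontal--horizontal contributions recombine, by the same $g$--$\mathcal{D}$ identity used in the first step but now applied on $\tilde{M_1}$, into $\bigl[(\mathcal{L}^1_{\eta_1}g_1)(\alpha_1,\beta_1)\bigr]^h$; analogously the vertical--vertical pieces assemble into $\frac{1}{(f^h)^2}\bigl[(\mathcal{L}^2_{\eta_2}g_2)(\alpha_2,\beta_2)\bigr]^v$. The two terms produced by the $(J_1 df)^h$ correction in $\mathcal{D}_{\omega_2^v}\eta_2^v$ cancel, via the symmetry of $g_2$, against the mixed-pairing terms whose free horizontal argument is $\alpha_1$ (respectively $\beta_1$). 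What survives from the mixed family are precisely the pieces of the form $g_1(J_1 df,\eta_1)^h g_2(\alpha_2,\beta_2)^v/(f^h)^3$, giving the third summand of the claim. I expect the only real obstacle to be the bookkeeping of the roughly dozen post-symmetrization terms and keeping the factors of $(f^h)^{-2}$ and $(f^h)^{-3}$ straight; no identity beyond Propositions 2.1 and 2.2 and the symmetry of $g_1$ and $g_2$ is required.
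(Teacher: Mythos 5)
Your route is genuinely different from the paper's. The paper expands the raw definition $(\mathcal{L}_\eta g^f)(\alpha,\beta)=\sharp_\Pi(\eta)(g^f(\alpha,\beta))-g^f([\eta,\alpha]_\Pi,\beta)-g^f(\alpha,[\eta,\beta]_\Pi)$ along the splittings $\Pi=\Pi_1+\Pi_2$ and $[\cdot,\cdot]_\Pi=[\cdot,\cdot]_{\Pi_1}+[\cdot,\cdot]_{\Pi_2}$, so that everything collapses to the two factor Lie derivatives plus the single leftover derivative term $[\sharp_{\Pi_1}(\eta_1)]^h\big((f^h)^{-2}g_2(\alpha_2,\beta_2)^v\big)$; you instead pass first to the symmetrized connection identity $(\mathcal{L}_\eta g^f)(\alpha,\beta)=g^f(\mathcal{D}_\alpha\eta,\beta)+g^f(\mathcal{D}_\beta\eta,\alpha)$ (the paper's equation $(3.3)$) and then feed in Proposition 2.2. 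Your version has more terms to track but buys a uniform mechanism (it is exactly the polarization of the computation behind Proposition 3.4), and the cancellations you describe --- the $(J_1df)^h$ correction from $\mathcal{D}_{\alpha_2^v}\eta_2^v$ against the mixed piece whose free horizontal slot is $\beta_1$, and symmetrically --- are the right ones.

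There is, however, a coefficient you cannot wave away. After symmetrization the surviving mixed family consists of \emph{two} equal pieces, one from $g^f(\mathcal{D}_\alpha\eta,\beta_2^v)$ and one from $g^f(\mathcal{D}_\beta\eta,\alpha_2^v)$, each equal to $\big(g_1(J_1df,\eta_1)/f^3\big)^h g_2(\alpha_2,\beta_2)^v$; their sum carries a factor $2$ that the stated proposition does not have. This is not a defect of your method: the same factor is forced by polarizing Proposition 3.4 through $(\mathcal{L}_\eta g^f)(\alpha,\alpha)=2g^f(\mathcal{D}_\alpha\eta,\alpha)$, and it also arises in the paper's own proof, where $\sharp_{\Pi_1}(\eta_1)\big(f^{-2}\big)=-2f^{-3}\,\sharp_{\Pi_1}(\eta_1)(f)$ produces a $2$ (and, depending on sign conventions for $\sharp_\Pi$ and $J$, a sign) that is silently dropped in the final ``Since'' display. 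So carry your computation through honestly: you will land on $2\big(g_1(J_1df,\eta_1)/f^3\big)^h g_2(\alpha_2,\beta_2)^v$ for the third summand, and you should flag the discrepancy with the statement rather than assert that you recover it as written. Nothing downstream is affected in the Casimir case, since there $J_1df=0$ and the disputed term vanishes either way.
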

\begin{proof}
From equation $(3.1)$, we conclude that
\begin{align*}
(\mathcal{L}_{\eta}g^f)(\alpha,\beta)&=\sharp_\Pi(\eta)(g^f(\alpha,\beta))-g^f([\eta,\alpha]_\Pi,\beta)-g^f(\alpha,[\eta,\beta]_\Pi)\\
&=\big[\sharp_{\Pi_1}(\eta_1)(g_1(\alpha_1,\beta_1))\big]^h+\big[\sharp_{\Pi_1}(\eta_1)\big]^h(\frac{1}{(f^h)^2}g_2(\alpha_2,\beta_2)^v)\\
&+\frac{1}{(f^h)^2}\big[\sharp_{\Pi_2}(\eta_2)(g_2(\alpha_2,\beta_2))\big]^v+\big[g_1([\eta_1,\alpha_1]_{\Pi_1},\beta_1)\big]^h\\
&+\frac{1}{(f^h)^2}\big[g_2([\eta_2,\alpha_2]_{\Pi_2},\beta_2)\big]^v+\big[g_1(\alpha_1,[\eta_1,\beta_1]_{\Pi_1})\big]^h\\
&+\frac{1}{(f^h)^2}\big[g_2(\alpha_2,[\eta_2,\beta_2]_{\Pi_2})\big]^v\\
&=\big[(\mathcal{L}_{\eta_1}^1g_1)(\alpha_1,\beta_1)\big]^h\\
&+\frac{1}{(f^h)^2}\big[(\mathcal{L}_{\eta_2}^2g_2)(\alpha_2,\beta_2)\big]^v+\big[\sharp_{\Pi_1}(\eta_1)\big]^h(\frac{1}{(f^h)^2}g_2(\alpha_2,\beta_2)^v).
\end{align*}
Since,
\begin{equation*}
\big[\sharp_{\Pi_1}(\eta_1)\big]^h(\frac{1}{(f^h)^2}g_2(\alpha_2,\beta_2)^v)=\Big(\frac{g_1(J_1df,\eta_1)}{f^3}\Big)^hg_2(\alpha_2,\beta_2)^v.
\end{equation*}
Thus the result follows.
\end{proof}
\begin{proposition}
	Let $(\tilde{M}=\tilde{M_1}\times_f\tilde{M_2},g^f,\Pi)$ be a Riemannian Poisson warped product space and $f$ is a Casimir function on $\tilde{M_1}$. Then for any $\eta\in\Omega^1(\tilde{M})$, we have
	\begin{equation*}
	(\mathcal{L}_{\eta}g^f)(\alpha,\beta)=\big[(\mathcal{L}_{\eta_1}^1g_1)(\alpha_1,\beta_1)\big]^h+\frac{1}{(f^h)^2}\big[(\mathcal{L}_{\eta_2}^2g_2)(\alpha_2,\beta_2)\big]^v,
	\end{equation*}	
	for any $\alpha,\beta\in\Omega^1(\tilde{M})$.
\end{proposition}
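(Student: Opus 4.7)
The plan is to invoke Proposition 3.1 directly and then kill off the extra third term using the Casimir hypothesis on $f$. From Proposition 3.1 we already have, for arbitrary $\eta = \eta_1^h+\eta_2^v$ and $\alpha,\beta \in \Omega^1(\tilde{M})$,
\[
(\mathcal{L}_{\eta}g^f)(\alpha,\beta)=\bigl[(\mathcal{L}_{\eta_1}^1g_1)(\alpha_1,\beta_1)\bigr]^h+\tfrac{1}{(f^h)^2}\bigl[(\mathcal{L}_{\eta_2}^2g_2)(\alpha_2,\beta_2)\bigr]^v+\Bigl(\tfrac{g_1(J_1df,\eta_1)}{f^3}\Bigr)^hg_2(\alpha_2,\beta_2)^v,
\]
so the whole task reduces to showing that under the Casimir hypothesis the third summand vanishes identically.

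The key identity to exploit is the relation $\Pi_1(\omega,\eta)=g_1(J_1\omega,\eta)$ coming from the definition of the field endomorphism $J_1$ associated with the pair $(\Pi_1,g_1)$. Applied with $\omega=df$ and $\eta=\eta_1$, this gives
\[
g_1(J_1 df,\eta_1)=\Pi_1(df,\eta_1)=\eta_1\bigl(\sharp_{\Pi_1}(df)\bigr),
\]
using the defining property of the anchor map $\sharp_{\Pi_1}$. Now recall that, by the definition recalled in the Preliminaries, $f$ being a Casimir function on $\tilde{M_1}$ means $X_f(\phi)=0$ for every $\phi\in\mathcal{C}^\infty(\tilde{M_1})$, equivalently $\sharp_{\Pi_1}(df)=X_f=0$. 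Plugging this in yields $g_1(J_1 df,\eta_1)=0$, so that the offending term $\bigl(g_1(J_1 df,\eta_1)/f^3\bigr)^h\,g_2(\alpha_2,\beta_2)^v$ is identically zero.

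Substituting this back into the formula supplied by Proposition 3.1 gives exactly the stated expression, and the proof is complete. There is no real obstacle here: the computation is a one-line application of the Casimir hypothesis once the correct reformulation $g_1(J_1 df,\eta_1)=\eta_1(\sharp_{\Pi_1}(df))$ is in hand, and the only thing worth being careful about is to notice that the relevant sharp map is $\sharp_{\Pi_1}$ (acting on the base) rather than $\sharp_\Pi$, which is legitimate because $f$ is pulled back from $\tilde{M_1}$ and the base--fiber splitting $\Pi=\Pi_1+\Pi_2$ ensures $\sharp_\Pi(df)=\sharp_{\Pi_1}(df)$.
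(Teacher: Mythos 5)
Your proposal is correct and follows the same route as the paper: apply Proposition 3.1 and observe that the extra term vanishes because the Casimir condition forces $J_1df=0$ (equivalently $g_1(J_1df,\eta_1)=\Pi_1(df,\eta_1)=0$), which is exactly the criterion the paper invokes. You merely spell out the justification of that criterion via the anchor map, which the paper leaves implicit (note only that with the paper's sign conventions $X_f=-\sharp_{\Pi_1}(df)$, though this does not affect the conclusion).
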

\begin{proof}
As, $f$ is Casimir function if and only if $J_1df=0$. After applying this criterion in Proposition 3.1 provides the result.
\end{proof}
The following proposition is a another characterization of Killing 1-form.
\begin{proposition}
Let $(\tilde{M},g,\Pi)$ be a Riemannian Poisson manifold. A 1-form $\eta\in\Omega^1(\tilde{M})$ is a Killing 1-form if and only if 
\begin{equation}
g(\mathcal{D}_\alpha\eta,\alpha)=0,
\end{equation}
for any 1-form $\alpha\in\Omega^1(\tilde{M})$.
\end{proposition}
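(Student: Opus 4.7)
The plan is to mimic the classical Riemannian argument that a vector field $X$ is Killing iff $g(\nabla_Y X,Y)=0$, working in the contravariant setting. Concretely, I first derive a closed-form expression for $(\mathcal{L}_\eta g)(\alpha,\beta)$ in terms of the contravariant Levi-Civita connection $\mathcal{D}$, and then recover the stated equivalence by polarization.

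The key computation proceeds as follows. Applying the Lie derivative formula $(3.1)$ with $k=2$ gives
\begin{equation*}
(\mathcal{L}_\eta g)(\alpha,\beta)=\sharp_\Pi(\eta)\bigl(g(\alpha,\beta)\bigr)-g([\eta,\alpha]_\Pi,\beta)-g(\alpha,[\eta,\beta]_\Pi).
\end{equation*}
I then rewrite the first term by the metric condition (ii) for $\mathcal{D}$, and rewrite the two Koszul brackets using the torsion-free property (i), i.e.\ $[\eta,\alpha]_\Pi=\mathcal{D}_\eta\alpha-\mathcal{D}_\alpha\eta$ and similarly for $[\eta,\beta]_\Pi$. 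After cancellation of the terms containing $\mathcal{D}_\eta\alpha$ and $\mathcal{D}_\eta\beta$, and using the symmetry of $g$ on $T^*\tilde{M}$, this collapses to the symmetric identity
\begin{equation*}
(\mathcal{L}_\eta g)(\alpha,\beta)=g(\mathcal{D}_\alpha\eta,\beta)+g(\mathcal{D}_\beta\eta,\alpha).
\end{equation*}

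Given this identity, the forward implication is immediate: if $\mathcal{L}_\eta g=0$, then setting $\beta=\alpha$ yields $2g(\mathcal{D}_\alpha\eta,\alpha)=0$, hence $(3.2)$. For the converse, I polarize: replacing $\alpha$ by $\alpha+\beta$ in $(3.2)$ and expanding by bilinearity of $\mathcal{D}$ and $g$, the two diagonal terms vanish by hypothesis, leaving exactly $g(\mathcal{D}_\alpha\eta,\beta)+g(\mathcal{D}_\beta\eta,\alpha)=0$, which by the displayed identity is $(\mathcal{L}_\eta g)(\alpha,\beta)=0$ for all $\alpha,\beta\in\Omega^1(\tilde{M})$.

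There is no substantive obstacle here; the argument is a direct contravariant analogue of the usual Riemannian proof. The only minor point to be careful about is bookkeeping in the derivation of the symmetric identity, to make sure the two $\mathcal{D}_\eta$-terms produced by the metric condition cancel precisely against the corresponding contributions of the Koszul brackets. Once that identity is in hand, the Killing condition and $(3.2)$ are equivalent purely by the standard symmetric-bilinear-form polarization.
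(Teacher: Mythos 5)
Your proof is correct and follows essentially the same route as the paper: both hinge on the identity $(\mathcal{L}_\eta g)(\alpha,\beta)=g(\mathcal{D}_\alpha\eta,\beta)+g(\alpha,\mathcal{D}_\beta\eta)$, followed by setting $\beta=\alpha$ and polarizing. The only difference is that you actually derive that identity from the metric and torsion-free conditions and spell out the polarization, both of which the paper leaves implicit.
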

\begin{proof}
Since $\eta\in\Omega^1(\tilde{M})$ and $\mathcal{D}$ is the contravariant Levi-Civita connection, then
	\begin{equation}
	(\mathcal{L}_\eta g)(\alpha,\beta)=g(\mathcal{D}_\alpha\eta,\beta)+g(\alpha,\mathcal{D}_\beta\eta),
	\end{equation}
	for any $\alpha,\beta\in\Omega^1(\tilde{M})$.
Putting $\alpha=\beta$ in $(3.3)$, we have
\begin{equation*}
(\mathcal{L}_\eta g)(\alpha,\alpha)=2g(\mathcal{D}_\alpha\eta,\alpha),
\end{equation*}
for any $\alpha\in\Omega^1(M)$. Thus the result follows.
\end{proof}
 In the preceding two propositions, we will provide a result on contravariant warped product space $(\tilde{M}=\tilde{M_1}\times_f \tilde{M_2},g^f)$ and Riemannian Poisson warped product space $(\tilde{M}=\tilde{M_1}\times_f\tilde{M_2},g^f,\Pi)$, which are helpful to describe the Killing 1-form. Here we will consider $\eta=\eta_1^h+\eta_2^v\:and\:\alpha=\alpha_1^h+\alpha_2^v$.
\begin{proposition} Let $(\tilde{M}=\tilde{M_1}\times_f \tilde{M_2},g^f)$ be a contravariant warped product space and $\mathcal{D}$ is the contravariant Levi-Civita connection associated with pair $(g^f,\Pi)$ on $\tilde{M}$. Then for any $\eta,\alpha\in\Omega^1(\tilde{M})$, we have
\begin{equation*}
g^f(\mathcal{D}_\alpha\eta,\alpha)=g_1(\mathcal{D}_{\alpha_1}^1\eta_1,\alpha_1)^h+\Big(\frac{g_1(J_1df,\eta_1)}{f^3}\Big)^h(||\alpha_2||_2^2)^v+\frac{1}{(f^h)^2}g_2(\mathcal{D}_{\alpha_2}^2\eta_2,\alpha_2)^v.
\end{equation*}	 
\end{proposition}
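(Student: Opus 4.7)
The plan is to unfold $\mathcal{D}_\alpha\eta$ using the bilinearity of the contravariant Levi-Civita connection, apply the explicit component formulas of Proposition 2.2 to each of the four resulting pieces, and then pair the result against $\alpha = \alpha_1^h + \alpha_2^v$ using the cometric formulas from Proposition 2.1. The only subtle point will be the cancellation of a cross-term; everything else is bookkeeping.

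First I would write
\begin{equation*}
\mathcal{D}_\alpha\eta = \mathcal{D}_{\alpha_1^h}\eta_1^h + \mathcal{D}_{\alpha_1^h}\eta_2^v + \mathcal{D}_{\alpha_2^v}\eta_1^h + \mathcal{D}_{\alpha_2^v}\eta_2^v.
\end{equation*}
Proposition 2.2(i) rewrites the first summand as a purely horizontal 1-form $(\mathcal{D}_{\alpha_1}^1\eta_1)^h$. Proposition 2.2(iii) makes the middle two summands into the vertical 1-forms $\tfrac{1}{f^h}g_1(J_1df,\alpha_1)^h\,\eta_2^v$ and $\tfrac{1}{f^h}g_1(J_1df,\eta_1)^h\,\alpha_2^v$ respectively. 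Proposition 2.2(ii) splits the last summand into a vertical piece $(\mathcal{D}_{\alpha_2}^2\eta_2)^v$ plus a horizontal piece $-\tfrac{1}{(f^h)^3}g_2(\alpha_2,\eta_2)^v(J_1df)^h$.

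Next I would take the $g^f$-inner product of each piece with $\alpha_1^h+\alpha_2^v$, using Proposition 2.1: horizontal pairs with horizontal via $g_1(\cdot,\cdot)^h$, vertical pairs with vertical via $\tfrac{1}{(f^h)^2}g_2(\cdot,\cdot)^v$, and mixed pairings vanish. The horizontal-horizontal contribution from the first summand gives $g_1(\mathcal{D}_{\alpha_1}^1\eta_1,\alpha_1)^h$. From the $\mathcal{D}_{\alpha_2^v}\eta_1^h$ term, pairing the vertical output with $\alpha_2^v$ yields $\bigl(g_1(J_1df,\eta_1)/f^3\bigr)^h(\|\alpha_2\|_2^2)^v$. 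From the vertical part of $\mathcal{D}_{\alpha_2^v}\eta_2^v$, pairing with $\alpha_2^v$ gives $\tfrac{1}{(f^h)^2}g_2(\mathcal{D}_{\alpha_2}^2\eta_2,\alpha_2)^v$. These are precisely the three terms in the stated formula.

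The key step — and the step I would flag as the only real obstacle to a clean computation — is verifying that the two remaining contributions cancel. Specifically, the vertical output of $\mathcal{D}_{\alpha_1^h}\eta_2^v$ paired with $\alpha_2^v$ produces $\bigl(g_1(J_1df,\alpha_1)/f^3\bigr)^h g_2(\eta_2,\alpha_2)^v$, while the horizontal piece $-\tfrac{1}{(f^h)^3}g_2(\alpha_2,\eta_2)^v(J_1df)^h$ of $\mathcal{D}_{\alpha_2^v}\eta_2^v$ paired with $\alpha_1^h$ contributes $-\bigl(g_1(J_1df,\alpha_1)/f^3\bigr)^h g_2(\alpha_2,\eta_2)^v$. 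By symmetry of $g_2$ these are exact negatives of each other and drop out, leaving exactly the claimed identity. Once this cancellation is made explicit the proof reduces to collecting terms.
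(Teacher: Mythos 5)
Your proof is correct and takes essentially the same route as the paper: expand $\mathcal{D}_\alpha\eta$ into its four lifted pieces via Proposition 2.2 and pair the result against $\alpha_1^h+\alpha_2^v$ using the cometric formulas of Proposition 2.1. The cancellation of the two cross-terms by symmetry of $g_2$, which you rightly flag as the only nontrivial step, is exactly what the paper leaves implicit in ``the result follows.''
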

\begin{proof}
From Proposition 2.2, for any $\eta,\alpha\in\Omega^1(M)$, we have
\begin{align}
\mathcal{D}_\alpha\eta&=(\mathcal{D}_{\alpha_1}^1\eta_1)^h+\Big(\frac{g_1(J_1df,\alpha_1)}{f}\Big)^h\eta_2^v+\Big(\frac{g_1(J_1df,\eta_1)}{f}\Big)^h\alpha_2^v\nonumber\\
&-\frac{1}{(f^h)^3}g_2(\alpha_2,\eta_2)^v(J_1df)^h+(\mathcal{D}_{\alpha_2}^2\eta_2)^v.
\end{align}
Since $g^f(\mathcal{D}_\alpha\eta,\alpha)=g^f(\mathcal{D}_\alpha\eta,\alpha_1^h)+g^f(\mathcal{D}_\alpha\eta,\alpha_2^v)$, thus from $(2.5)$ and $(3.4)$, the result follows.
\end{proof}
\begin{proposition}
Let $(\tilde{M}=\tilde{M_1}\times_f\tilde{M_2},g^f,\Pi)$ be a Riemannian Poisson warped product space and $f$ is a Casimir function on $\tilde{M_1}$. Then for any $\eta,\alpha\in\Omega^1(M)$, we have
\begin{equation}
g^f(\mathcal{D}_\alpha\eta,\alpha)=g_1(\mathcal{D}_{\alpha_1}^1\eta_1,\alpha_1)^h+\frac{1}{(f^h)^2}g_2(\mathcal{D}_{\alpha_2}^2\eta_2,\alpha_2)^v.
\end{equation}
\end{proposition}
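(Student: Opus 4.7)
The plan is to obtain Proposition 3.5 as an immediate specialization of the already-established Proposition 3.4, in exactly the same way that Proposition 3.2 was derived from Proposition 3.1. Concretely, I would begin by reading off the conclusion of Proposition 3.4 verbatim for the given $\eta,\alpha \in \Omega^1(\tilde{M})$ with the splittings $\eta=\eta_1^h+\eta_2^v$ and $\alpha=\alpha_1^h+\alpha_2^v$:
\begin{equation*}
g^f(\mathcal{D}_\alpha\eta,\alpha)=g_1(\mathcal{D}_{\alpha_1}^1\eta_1,\alpha_1)^h+\Big(\tfrac{g_1(J_1df,\eta_1)}{f^3}\Big)^h(\|\alpha_2\|_2^2)^v+\tfrac{1}{(f^h)^2}g_2(\mathcal{D}_{\alpha_2}^2\eta_2,\alpha_2)^v.
\end{equation*}
Nothing in Proposition 3.4 required $(\tilde{M},g^f,\Pi)$ to be a Riemannian Poisson warped product space, so the formula is available without additional justification.

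Next I would invoke the same elementary equivalence already used in the proof of Proposition 3.2: the function $f\in\mathcal{C}^\infty(\tilde{M}_1)$ is Casimir on $(\tilde{M}_1,\Pi_1)$ if and only if $J_1 df=0$. This follows from the definitions, since $\sharp_{\Pi_1}(df) = \sharp_{\tilde{g}_1}(J_1 df)$ by the relation $\Pi_1(\omega,\eta) = g_1(J_1\omega,\eta)$, and $\sharp_{\tilde{g}_1}$ is a bundle isomorphism. Hence for any $\eta_1 \in \Omega^1(\tilde{M}_1)$ one has $g_1(J_1 df,\eta_1)=0$, which kills the middle term above and leaves exactly the claimed identity (3.5).

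There is essentially no obstacle here: the proposition is a one-line corollary of Proposition 3.4 together with the Casimir hypothesis, and the entire proof can be written in two sentences. The only thing I would be careful to mention explicitly is that the Riemannian Poisson assumption on the warped product enters only implicitly (through the existence and uniqueness of the contravariant Levi-Civita connection $\mathcal{D}$ used in Proposition 3.4) and that the Casimir hypothesis on $f$ is precisely what forces the cross-term to vanish, paralleling the simplification going from Proposition 3.1 to Proposition 3.2.
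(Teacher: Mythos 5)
Your proposal is correct and follows exactly the paper's own argument: the paper likewise derives Proposition 3.5 by applying the equivalence ``$f$ is Casimir if and only if $J_1 df = 0$'' to the formula of Proposition 3.4, which annihilates the middle term. Your added justification of that equivalence via the anchor map and the endomorphism $J_1$ is a harmless elaboration of what the paper leaves implicit.
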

\begin{proof}
As, $f$ is Casimir function if and only if $J_1df=0$. After applying this criterion in Proposition 3.4, provides the result.
\end{proof}
In the following theorem, we have to prove the necessary and sufficient conditions for Killing 1-form on Riemannian Poisson warped product space.
\begin{theorem}
Let $(\tilde{M}=\tilde{M_1}\times_f\tilde{M_2},g^f,\Pi)$ be a Riemannian Poisson warped product space and $f$ is a Casimir function on $\tilde{M_1}$. Then 1-form $\eta\in\Omega^1(M)$ is Killing 1-form if and only if the following conditions holds:\\
\textbf{(1)}.$\:\eta_1$ is a Killing 1-form on $\tilde{M_1}$.\\
\textbf{(2)}.$\:\eta_2$ is a Killing 1-form on $\tilde{M_2}$.
\end{theorem}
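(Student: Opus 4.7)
The plan is to read this theorem directly off Proposition 3.2. Under the Casimir hypothesis $J_1 df=0$, that proposition supplies the clean decomposition
\[
(\mathcal{L}_{\eta}g^f)(\alpha,\beta)=\big[(\mathcal{L}_{\eta_1}^1g_1)(\alpha_1,\beta_1)\big]^h+\frac{1}{(f^h)^2}\big[(\mathcal{L}_{\eta_2}^2g_2)(\alpha_2,\beta_2)\big]^v,
\]
so each direction of the equivalence amounts to testing this identity on an appropriate class of 1-forms on $\tilde M$.

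For the sufficient direction, I would assume $\eta_1$ is Killing on $\tilde{M_1}$ and $\eta_2$ is Killing on $\tilde{M_2}$. Then $\mathcal{L}_{\eta_1}^1 g_1=0$ and $\mathcal{L}_{\eta_2}^2 g_2=0$, so both summands on the right of the displayed identity vanish for every decomposition $\alpha=\alpha_1^h+\alpha_2^v$ and $\beta=\beta_1^h+\beta_2^v$. Hence $(\mathcal{L}_\eta g^f)(\alpha,\beta)=0$ for all such test pairs, and by bilinearity for all $\alpha,\beta\in\Omega^1(\tilde M)$, which is precisely the Killing condition for $\eta$ on $(\tilde M,g^f,\Pi)$.

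For the necessary direction, I would isolate each summand by specializing the test arguments. Taking $\alpha=\alpha_1^h$ and $\beta=\beta_1^h$ purely horizontal, the vertical summand drops out, and the assumption $\mathcal{L}_\eta g^f=0$ yields $\big[(\mathcal{L}_{\eta_1}^1g_1)(\alpha_1,\beta_1)\big]^h=0$ on $\tilde M$. Since the horizontal lift $\phi\mapsto\phi^h=\phi\circ\pi$ is injective on functions, this forces $(\mathcal{L}_{\eta_1}^1g_1)(\alpha_1,\beta_1)=0$ on $\tilde{M_1}$ for all $\alpha_1,\beta_1\in\Omega^1(\tilde{M_1})$, i.e.\ $\eta_1$ is Killing. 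Symmetrically, testing against $\alpha=\alpha_2^v$ and $\beta=\beta_2^v$ purely vertical isolates the second summand (the prefactor $1/(f^h)^2$ is nowhere zero since $f>0$) and shows $\eta_2$ is Killing on $\tilde{M_2}$.

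The argument is essentially bookkeeping from Proposition 3.2; the only delicate point is the injectivity of the horizontal and vertical lifts on functions, which is standard for the projections of a product manifold and lets one pull the vanishing identity on $\tilde M$ back to the factors. I therefore expect no serious obstacle.
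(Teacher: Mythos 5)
Your proposal is correct and follows essentially the same route as the paper: both arguments read the theorem off a horizontal/vertical decomposition established earlier under the Casimir hypothesis $J_1df=0$, and then isolate each factor by testing against purely horizontal and purely vertical lifts. The only (cosmetic) difference is that the paper specializes equation $(3.5)$, i.e.\ the splitting of $g^f(\mathcal{D}_\alpha\eta,\alpha)$ from Proposition 3.6 combined with the characterization of Killing $1$-forms in Proposition 3.3, whereas you specialize the Lie-derivative splitting of Proposition 3.2 directly; your version is, if anything, slightly cleaner since it avoids the paper's loose phrasing of ``putting $\eta=\eta_1^h$'' where what is really being specialized is the test argument $\alpha$.
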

\begin{proof}
The if" part is obvious. For the "only if part", let $\eta\in\Omega^1(\tilde{M})$ is Killing 1-form. Putting $\eta=\eta_1^h$ and $\eta=\eta_2^v$ in $(3.5)$ provide $(1)$ and $(2)$ respectively.
\end{proof}
\subsection{Parallel 1-form}
Let $(\tilde{M}^n,\Pi)$ be the n-dimensional Poisson manifold and $\mathcal{D}$ is the contravariant Levi-Civita connection associated to $(\Pi,g)$, then\\
\textbf{(i)} In (\cite{yar},eqn. $5$), authors provided contravariant derivative of a multivector field $P$ of degree $r$ i.e., $P\in\mathfrak{X}^r(\tilde{M})=\Gamma(\Lambda^rT\tilde{M})$ with respect to $1$-form $\alpha\in\Omega^1(\tilde{M})$, given by 
\begin{equation}
(\mathcal{D}_{\alpha}P)(\alpha_1,...,\alpha_r)=\sharp_{\Pi}(\alpha)(P(\alpha_1,...,\alpha_r))-\displaystyle\sum_{i=1}^{r}
Q\big(\alpha_1,...,\mathcal{D}_{\alpha}\alpha_i,...,\alpha_r\big),
\end{equation}
where $\alpha_1,...,\alpha_r\in\Omega^1(\tilde{M})$.\\

\textbf{(ii)} Let $Q$ be any tensor field on $\tilde{M}$. In (\cite{zs},p. 9), author provided contravariant Laplacian operator corresponding  to $\mathcal{D}$ over $Q$  by
\begin{equation}
\Delta^\mathcal{D}(Q):=-\displaystyle\sum_{i=1}^{n}\mathcal{D}^2_{\theta_i,\theta_i}Q,
\end{equation}
where $\{\theta_1,...,\theta_n\}$ is any local coframe field on $\tilde{M}$, and
\begin{equation*}
\mathcal{D}^2_{\alpha,\beta}=\mathcal{D}_\alpha\mathcal{D}_\beta-\mathcal{D}_{\mathcal{D}_\alpha\beta}
\end{equation*}
is the second order contravariant derivative.
\begin{definition}
Let $\mathcal{D}$ is the contravariant Levi-Civita connection associated to $(\Pi,g)$ on Poisson manifold $(\tilde{M},\Pi)$. A tensor field $S$ is said to be parallel with respect to contravariant Levi-Civita connection $\mathcal{D}$ if 
\begin{equation}
\mathcal{D}S=0.
\end{equation}
\end{definition}
\begin{remark}
If we take $S=g$, then it is always parallel.
\end{remark}
From Corollary 4.2, Lemma 4.3 and Corollary 4.7 of \cite{zs}, we conclude the following lemma. This will be useful later on.
\begin{lemma}
	Let $(\tilde{M}^n,g,\Pi)$ be a compact Riemannian Poisson manifold and a smooth function $f$ on $\tilde{M}$ satisfies $\Delta^\mathcal{D}(f)\geq0$, then $\Delta^\mathcal{D}(f)=0$. 
\end{lemma}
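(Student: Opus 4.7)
The plan is to deduce the lemma by running a Hopf-style maximum principle argument on the compact Riemannian Poisson manifold, relying on the three cited ingredients from \cite{zs}. First I would use the divergence/self-adjointness result (Corollary 4.2 of \cite{zs}) to show that on a compact Riemannian Poisson manifold the contravariant Laplacian integrates to zero:
\begin{equation*}
\int_{\tilde{M}} \Delta^{\mathcal{D}}(f)\, dv_{g} = 0,
\end{equation*}
for every $f \in \mathcal{C}^{\infty}(\tilde{M})$. Concretely, one views $\Delta^{\mathcal{D}}$ as minus the $g$-divergence of the contravariant gradient $\mathcal{D}f = df \circ \sharp_{\Pi}$; then a Stokes-type identity (valid in the contravariant Poisson setting because $\mathcal{D}\Pi = 0$ makes $\mathcal{D}$ compatible with the cometric volume form) forces the integral to vanish.

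Next, Lemma 4.3 of \cite{zs} lets me rewrite $\Delta^{\mathcal{D}}(f)$ intrinsically in a way that is manifestly continuous on $\tilde{M}$, so that the hypothesis $\Delta^{\mathcal{D}}(f) \geq 0$ is a pointwise inequality between continuous functions. At this stage the situation is standard: I have a continuous, non-negative function on a compact manifold whose integral against the positive volume form $dv_{g}$ vanishes. Invoking Corollary 4.7 of \cite{zs} (the continuous vanishing principle) I conclude $\Delta^{\mathcal{D}}(f) = 0$ identically on $\tilde{M}$.

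The step I expect to be the real obstacle, and the one that is not just bookkeeping, is the divergence identity in the first paragraph. Unlike the classical Riemannian case, the anchor $\sharp_{\Pi}$ can be degenerate and the contravariant covariant derivative $\mathcal{D}$ is built from the Koszul bracket rather than the Lie bracket of vector fields, so ordinary Stokes' theorem does not directly apply to $\Delta^{\mathcal{D}}(f)$. This is precisely the content that Corollary 4.2 of \cite{zs} supplies, and the argument there pivots on the Riemannian Poisson compatibility $\mathcal{D}\Pi = 0$ together with metric compatibility of $\mathcal{D}$; once that ingredient is cited, the remaining implication is a one-line consequence of non-negativity plus the integral vanishing, and the lemma follows.
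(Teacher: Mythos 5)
The paper offers no argument of its own here: it simply asserts that the lemma follows from Corollary 4.2, Lemma 4.3 and Corollary 4.7 of the cited reference, which is exactly the citation-based route you take, only you additionally spell out how those ingredients combine (vanishing of $\int_{\tilde{M}}\Delta^{\mathcal{D}}(f)\,dv_{g}$ on a compact Riemannian Poisson manifold, plus non-negativity of a continuous function with zero integral). Your reconstruction is correct and is essentially the same approach as the paper's, with the added merit of making explicit that the only genuinely non-classical step is the contravariant divergence identity, which both you and the paper delegate to the reference.
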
 
Bochner \cite{sbo}, provided a result for compact oriented Riemannian manifold $\tilde{M}$, that if Ricci curvature of $\tilde{M}$ is non-positive then every Killing vector field on $\tilde{M}$ is parallel. Later H. H. Wu studied this result in detail (see, \cite{hhw},p. 324). Now we will prove similar result for Killing 1-form on compact Riemannian Poisson manifold.
\begin{theorem}
Let $\eta$ is a Killing 1-form on $n$-dimensional compact Riemannian Poisson manifold $(\tilde{M}^n,g,\Pi)$ with vanishing $\mathcal{D}_\eta\eta$. If $Ric(\eta,\eta)\leq0$, then $\eta$ is parallel.
\end{theorem}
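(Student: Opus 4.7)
The plan is to adapt the classical Bochner technique for Killing vector fields, using the contravariant Laplacian $\Delta^{\mathcal{D}}$ from $(3.7)$ applied to the squared-norm function $f:=\tfrac12 g(\eta,\eta)$. Working in a local orthonormal coframe $\{\theta_1,\dots,\theta_n\}$ and using the metric-compatibility condition (ii) of $\mathcal{D}$ twice, direct expansion of $\mathcal{D}^2_{\theta_i,\theta_i}f$ gives the Bochner-type identity
\begin{equation*}
\Delta^{\mathcal{D}}f \;=\; g(\Delta^{\mathcal{D}}\eta,\eta) \;-\; \|\mathcal{D}\eta\|^2,
\end{equation*}
where $\|\mathcal{D}\eta\|^2:=\sum_{i=1}^{n} g(\mathcal{D}_{\theta_i}\eta,\mathcal{D}_{\theta_i}\eta)\ge 0$.

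Next, I would evaluate $g(\Delta^{\mathcal{D}}\eta,\eta)$ using the Killing hypothesis. Proposition 3.3 together with polarization yields the skew-symmetry $g(\mathcal{D}_\alpha\eta,\beta)+g(\mathcal{D}_\beta\eta,\alpha)=0$; in particular each summand $g(\mathcal{D}_{\theta_i}\eta,\theta_i)$ vanishes. Differentiating this pointwise identity along $\sharp_\Pi(\eta)$ and reusing Killing skew-symmetry once more collapses $\sum_i g(\mathcal{D}^2_{\eta,\theta_i}\eta,\theta_i)$ to zero. Inserting the Ricci-identity $\mathcal{D}^2_{\alpha,\beta}\eta-\mathcal{D}^2_{\beta,\alpha}\eta=\mathcal{R}(\alpha,\beta)\eta$ and matching with definition $(2.2)$ of $Ric$ then converts $\sum_i g(\mathcal{D}^2_{\theta_i,\theta_i}\eta,\eta)$ into $-Ric(\eta,\eta)$, plus a residual term that I expect to factor through $\mathcal{D}_\eta\eta$; the hypothesis $\mathcal{D}_\eta\eta=0$ eliminates it, yielding
\begin{equation*}
g(\Delta^{\mathcal{D}}\eta,\eta) \;=\; Ric(\eta,\eta).
\end{equation*}
Combining with the first step produces $\Delta^{\mathcal{D}}f = Ric(\eta,\eta)-\|\mathcal{D}\eta\|^2$.

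The conclusion is then routine. Since $Ric(\eta,\eta)\le 0$ by hypothesis, the above identity gives $\Delta^{\mathcal{D}}(-f)\ge 0$ on the compact manifold $\tilde{M}^n$. Lemma 3.9 forces $\Delta^{\mathcal{D}}(-f)=0$, whence $\|\mathcal{D}\eta\|^2=Ric(\eta,\eta)$. The left-hand side is nonnegative while the right is nonpositive, so both vanish pointwise. In particular $\|\mathcal{D}\eta\|^2=0$, so $\mathcal{D}\eta=0$, i.e.\ $\eta$ is parallel.

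The main technical obstacle is the curvature identity $g(\Delta^{\mathcal{D}}\eta,\eta)=Ric(\eta,\eta)$ in Step 2. The contravariant curvature $\mathcal{R}$ on a Poisson manifold does not automatically inherit the full symmetry package of its Riemannian counterpart, so one has to verify that the Bianchi-type commutator indeed reproduces $(2.2)$ with the correct sign, and that the extraneous term generated during the manipulation genuinely assembles into a multiple of $g(\mathcal{D}_\eta\eta,\cdot)$ so that the hypothesis $\mathcal{D}_\eta\eta=0$ is exactly what is needed. Once that identity is secured, the sign bookkeeping and the appeal to Lemma~3.9 are immediate.
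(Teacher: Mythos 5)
Your proposal follows essentially the same route as the paper: the Bochner identity $\Delta^{\mathcal{D}}\bigl(-\tfrac12|\eta|^2\bigr)=|\mathcal{D}\eta|^2-g(\Delta^{\mathcal{D}}\eta,\eta)$, the curvature identity $g(\Delta^{\mathcal{D}}\eta,\eta)=Ric(\eta,\eta)$, and the appeal to Lemma 3.9 on the compact manifold. The step you flag as the main obstacle is exactly the content of the paper's computation in equations $(3.11)$--$(3.17)$, where the Killing skew-symmetry and the hypothesis $\mathcal{D}_\eta\eta=0$ are used precisely as you anticipate to assemble the commutator into $g(\mathcal{R}(\theta_i,\eta)\theta_i,\eta)$ and kill the residual terms.
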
 
\begin{proof}
Since $\eta$ is a Killing 1-form, equation $(3.3)$, implies that
\begin{equation}
g(\mathcal{D}_\alpha\eta,\beta)+g(\mathcal{D}_\beta\eta,\alpha)=0,
\end{equation}
for any $\alpha,\beta\in\Omega^1(\tilde{M})$.
Let $\{\theta_1,...,\theta_n\}$ is any local coframe field on $M$, then from $(3.7)$, we have
\begin{align}
\Delta^\mathcal{D}\big(-\frac{1}{2}|\eta|^2\big)&=\sum_{i=1}^{n}\{\mathcal{D}_{\theta_i}(\mathcal{D}_{\theta_i}g(\eta,\eta))-\mathcal{D}_{\mathcal{D}_{\theta_i}\theta_i}(g(\eta,\eta))\}\nonumber\\
&=\sum_{i=1}^{n}\{\mathcal{D}_{\theta_i}(g(\mathcal{D}_{\theta_i}\eta,\eta))-g(\mathcal{D}_{\mathcal{D}_{\theta_i}\theta_i}\eta,\eta)\}\nonumber\\
&=|\mathcal{D}\eta|^2-g(\Delta^\mathcal{D}(\eta),\eta),
\end{align} 
where $|\mathcal{D}\eta|^2=\sum_{i=1}^{n}g(\mathcal{D}_{\theta_i}\eta,\mathcal{D}_{\theta_i}\eta)$. Now we will calculate the second term of $(3.10)$. For any $i\in\{1,....,n\}$, we have
\begin{equation}
g(\mathcal{D}_{\theta_i,\theta_i}^2\eta,\eta)=g(\mathcal{D}_{\theta_i}\mathcal{D}_{\theta_i}\eta,\eta)-g(\mathcal{D}_{\mathcal{D}_{\theta_i}\theta_i}\eta,\eta).
\end{equation} 
The second term of L. H. S. of the above equation equal to $-g(\mathcal{D}_\eta\eta,\mathcal{D}_{\theta_i}\theta_i)$ by $(3.9)$, and vanishes as $\mathcal{D}_\eta\eta$. Hence $(3.11)$, conclude that
\begin{align}
g(\mathcal{D}_{\theta_i,\theta_i}^2\eta,\eta)&=g(\mathcal{D}_{\theta_i}\mathcal{D}_{\theta_i}\eta,\eta)\nonumber\\
&=g(\mathcal{D}_{\theta_i}\mathcal{D}_\eta\theta_i,\eta)+g(\mathcal{D}_{\theta_i}[\theta_i,\eta]_\Pi,\eta)\nonumber\\
&=g(\mathcal{D}_{\theta_i}\mathcal{D}_\eta\theta_i,\eta)+\sharp_\Pi(\theta_i)g([\theta_i,\eta]_\Pi,\eta)-g([\theta_i,\eta]_\Pi,\mathcal{D}_{\theta_i}\eta).
\end{align}
Since, $\alpha$ is Killing 1-form therefore 
\begin{equation}
g([\theta_i,\eta]_\Pi,\eta)=-\sharp_\Pi(\eta)g(\theta_i,\eta)
\end{equation}
and 
\begin{align}
g([\theta_i,\eta]_\Pi,\mathcal{D}_{\theta_i}\eta)&\stackrel{(3.9)}{=}-g(\theta_i,\mathcal{D}_{[\theta_i,\eta]_\Pi}\eta)\nonumber\\
&=-\sharp_\Pi([\theta_i,\eta]_\Pi)g(\eta,\theta_i)+g(\eta,\mathcal{D}_{[\theta_i,\eta]_\Pi}\theta_i).
\end{align}
After using $(3.13)$ and $(3.14)$ in $(3.12)$, we obtain
\begin{align}
g(\mathcal{D}_{\theta_i,\theta_i}^2\eta,\eta)&=g(\mathcal{D}_{\theta_i}\mathcal{D}_\eta\theta_i,\eta)+\{-\sharp_\Pi(\theta_i)\sharp_\Pi(\eta)+\sharp_\Pi([\theta_i,\eta]_\Pi)\}g(\eta,\theta_i)\nonumber\\
&-g(\mathcal{D}_{[\theta_i,\eta]_\Pi}\theta_i,\eta)\nonumber\\
&\stackrel{(2.1)}{=}g(\mathcal{D}_{\theta_i}\mathcal{D}_\eta\theta_i,\eta)-\sharp_\Pi(\eta)\sharp_\Pi(\theta_i)g(\eta,\theta_i)-g(\mathcal{D}_{[\theta_i,\eta]_\Pi}\theta_i,\eta).
\end{align} 
The second term of $(3.15)$ follows by vanishing of $\mathcal{D}_\eta\eta$,
\begin{align}
\sharp_\Pi(\eta)\sharp_\Pi(\theta_i)g(\eta,\theta_i)&=\sharp_\Pi(\eta)\{g(\mathcal{D}_{\theta_i}\eta,\theta_i)+g(\eta,\mathcal{D}_{\theta_i}\theta_i)\}\nonumber\\
&\stackrel{(3.9)}{=}\sharp_\Pi(\eta)g(\eta,\mathcal{D}_{\theta_i}\theta_i)\nonumber\\
&=g(\mathcal{D}_\eta\mathcal{D}_{\theta_i}\theta_i,\eta).
\end{align}
Using equation $(3.16)$ in $(3.15)$, yields
\begin{equation*}
g(\mathcal{D}_{\theta_i,\eta_i}^2\eta,\eta)=g(\mathcal{R}(\theta_i,\eta)\theta_i,\eta).
\end{equation*}
 After taking summation both sides of the above equation conclude that 
\begin{equation}
g(\Delta^\mathcal{D}(\eta),\eta)=Ric(\eta,\eta).
\end{equation}
Now using $(3.17)$ in $(3.10)$, we have
\begin{align}
\Delta^\mathcal{D}\big(-\frac{1}{2}|\eta|^2\big)&=|\mathcal{D}\eta|^2-Ric(\eta,\eta).
\end{align} 
Since $Ric(\eta,\eta)\leq0$ then the right hand side of $(3.18)$ is non-negative and hence vanishes by Lemma 3.9. It conclude that $|\mathcal{D}\eta|^2=0$. This is equivalent to $\eta$ being parallel. 
\end{proof}
In the following theorem, we will prove the above result for compact Riemannian Poisson warped product space.
\begin{theorem}
	Let $(\tilde{M}=\tilde{M_1}\times_f\tilde{M_2},g^f,\Pi)$ be a compact Riemannian Poisson warped product space and $f$ is a Casimir function on $\tilde{M_1}$ also let 1-form $\eta=\eta_1^h+\eta_2^v\in\Omega^1(\tilde{M})$. Then\\
	\textbf{(1)}. $\eta=\eta_1^h+\eta_2^v$ is parallel if the 1-form $\eta_i$ is a Killing 1-form, $Ric_i(\eta_i,\eta_i)\leq0$ and $\mathcal{D}_{\eta_i}^i\eta_i$ vanishes, $i=1,2$.\\
	\textbf{(2)}. $\eta=\eta_1^h$ is parallel if the 1-form $\eta_1$ is a Killing 1-form, $Ric_1(\eta_1,\eta_1)\leq0$ and $\mathcal{D}_{\eta_1}^1\eta_1$ vanishes.\\
	\textbf{(3)}. $\eta=\eta_2^v$ is parallel if the 1-form $\eta_2$ is a Killing 1-form, $Ric_2(\eta_2,\eta_2)\leq0$ and $\mathcal{D}_{\eta_2}^2\eta_2$ vanishes.
\end{theorem}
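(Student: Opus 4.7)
The plan is to reduce statement (1) to Theorem 3.10 applied on the compact Riemannian Poisson manifold $(\tilde{M}, g^f, \Pi)$, and to obtain (2) and (3) as the specializations $\eta_2 = 0$ and $\eta_1 = 0$ of (1). The Casimir hypothesis is the crucial simplifying ingredient: since $J_1 df = 0$, the corrective terms in Proposition 2.2 collapse, so the contravariant Levi-Civita connection $\mathcal{D}$ of the warped product decouples along the two factors, effectively turning the warped product into a direct-product-like structure for the purposes of connection and curvature analysis.

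First, one checks that $\eta = \eta_1^h + \eta_2^v$ is a Killing 1-form on the whole warped product; this is immediate from the ``if'' direction of Theorem 3.7, since each $\eta_i$ is Killing on $\tilde{M}_i$. Next, $\mathcal{D}_\eta \eta = 0$ follows by substituting $\alpha = \eta$ in equation (3.4) after imposing $J_1 df = 0$: this gives $\mathcal{D}_\eta \eta = (\mathcal{D}^1_{\eta_1} \eta_1)^h + (\mathcal{D}^2_{\eta_2} \eta_2)^v$, which vanishes by the hypotheses on the two factors.

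The remaining ingredient is the Ricci bound $Ric(\eta, \eta) \leq 0$. To get it, I would choose a local $g^f$-orthonormal coframe of the form $\{(\theta^1_i)^h\}_{i=1}^{n_1} \cup \{f^h (\theta^2_j)^v\}_{j=1}^{n_2}$, where $\{\theta^1_i\}$ and $\{\theta^2_j\}$ are $g_1$- and $g_2$-orthonormal. Computing the curvature tensor of $\mathcal{D}$ in the four lift-type cases from Proposition 2.2 with $J_1 df = 0$, together with the Casimir identity $\sharp_\Pi(\omega)(f^h) = 0$ (valid for both horizontal and vertical $\omega$, since $f$ is a function on $\tilde{M}_1$ and $\sharp_{\Pi_1}(df) = 0$), all mixed Koszul brackets and all mixed covariant derivatives vanish; one is left with $\mathcal{R}(\omega_1^h, \eta_1^h)\gamma_1^h = (\mathcal{R}_1(\omega_1, \eta_1)\gamma_1)^h$ and $\mathcal{R}(\omega_2^v, \eta_2^v)\gamma_2^v = (\mathcal{R}_2(\omega_2, \eta_2)\gamma_2)^v$, with all cross curvatures $\mathcal{R}(\cdot^h, \cdot^v)(\cdot)$ vanishing identically. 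Tracing through definitions (2.2) and (2.5), the $(f^h)^2$ from the coframe normalization of the vertical block cancels the $(f^h)^{-2}$ of the cometric, yielding the clean decomposition $Ric(\eta, \eta) = Ric_1(\eta_1, \eta_1)^h + Ric_2(\eta_2, \eta_2)^v \leq 0$. Theorem 3.10 then delivers parallelism of $\eta$.

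Parts (2) and (3) follow from (1) by taking $\eta_2 = 0$ or $\eta_1 = 0$ respectively, since then the hypotheses on the vanishing component are automatic. The main technical obstacle will be the Ricci decomposition -- specifically, systematically verifying that every mixed curvature term vanishes and that the $f^h$ factors balance -- as the Casimir hypothesis is silently doing all the work here, and without it the off-diagonal terms of Proposition 2.2 would couple the two factors and destroy the factor-wise bound on $Ric(\eta,\eta)$.
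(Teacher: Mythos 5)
Your proposal is correct, but it runs in the opposite direction from the paper's argument. You verify the three hypotheses of Theorem 3.10 (Killing via Theorem 3.7, $\mathcal{D}_\eta\eta=0$ via the decoupled connection, and $Ric(\eta,\eta)\leq 0$ via a full curvature decomposition $\mathcal{R}=\mathcal{R}_1^h\oplus\mathcal{R}_2^v$ with vanishing mixed terms) on the \emph{total} space $(\tilde{M},g^f,\Pi)$ and invoke Theorem 3.10 once there. The paper instead never touches the curvature of the warped product: it picks the adapted coframe $\{dx_i^h, f^h dy_j^v\}$, uses the Casimir condition to split the Dirichlet-type quantity as $|\mathcal{D}\eta|^2=(|\mathcal{D}^1\eta_1|^2)^h+(|\mathcal{D}^2\eta_2|^2)^v$, applies Theorem 3.10 \emph{factor-wise} (each $(\tilde{M_i},g_i,\Pi_i)$ being a compact Riemannian Poisson manifold by Theorem 2.3 and compactness of the factors of a compact product) to kill each summand, and concludes. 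Your route costs more computation --- the Ricci decomposition with the $(f^h)^{\pm 2}$ bookkeeping and the check that all mixed curvatures vanish, which you correctly flag as the technical burden --- but it buys a statement that is conceptually a single application of the Bochner argument on the ambient space and makes explicit that $\eta$ itself is Killing with $Ric(\eta,\eta)\le 0$ there; the paper's route is shorter because the norm decomposition only needs the first covariant derivatives from Proposition 2.2, not the curvature. Both correctly reduce (2) and (3) to (1) by setting the missing component to zero, and both hinge identically on $J_1df=0$ decoupling the connection.
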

\begin{proof}
Let $U_1$ and $U_2$ are two open subset of $\tilde{M_1}$ and $\tilde{M_2}$ respectively. Assume that $\{dx_1,...,dx_{n_1}\}$ is a local $g_1$-coframe on $U_1$ and $\{dy_1,...,dy_{n_2}\}$ is a local $g_2$-coframe on $U_2$, then
\begin{equation*}
\{dx_1^{h},...,dx_{n_1}^{h},f^hdy_1^{v},...,f^hdy_{n_2}^{v}\}
\end{equation*}
is a local $g^f$-coframe on open subset $U_1\times U_2$ of $\tilde{M_1}\times \tilde{M_2}$.
Thus for any 1-forms $\eta\in\Omega^1(\tilde{M})$, we have
\begin{equation}
|\mathcal{D}\eta|^2=\sum_{i=1}^{n_1}g^f(\mathcal{D}_{dx_i^h}\eta,\mathcal{D}_{dx_i^h}\eta)+(f^h)^2\sum_{j=1}^{n_2}g^f(\mathcal{D}_{dy_j^v}\eta,\mathcal{D}_{dy_j^v}\eta).
\end{equation}
Using the condition of Casimir function $f$ in $(3.4)$ the first term of $(3.19)$, is given by
\begin{align}
\sum_{i=1}^{n_1}g^f(\mathcal{D}_{dx_i^h}\eta,\mathcal{D}_{dx_i^h}\eta)&=\sum_{i=1}^{n_1}g^f(\mathcal{D}_{dx_i^h}\eta_1^h,\mathcal{D}_{dx_i^h}\eta_1^h)\nonumber\\
&=\sum_{i=1}^{n_1}g^f((\mathcal{D}_{dx_i}^1\eta_1)^h,(\mathcal{D}_{dx_i}^1\eta_1)^h)\nonumber\\
&\stackrel{(2.5)}{=}\sum_{i=1}^{n_1}g_1(\mathcal{D}_{dx_i}^1\eta_1,\mathcal{D}_{dx_i}^1\eta_1)^h\nonumber\\
&=(|\mathcal{D}^1\eta_1|^2)^h,
\end{align}
and the second term of $(3.19)$, is given by
\begin{align}
(f^h)^2\sum_{j=1}^{n_2}g^f(\mathcal{D}_{dy_j^v}\eta,\mathcal{D}_{dy_j^v}\eta)&=(f^h)^2\sum_{j=1}^{n_2}g^f(\mathcal{D}_{dy_j^v}\eta_2^v,\mathcal{D}_{dy_j^v}\eta_2^v)\nonumber\\
&=(f^h)^2\sum_{j=1}^{n_2}g^f((\mathcal{D}_{dy_j}^2\eta_2)^v,(\mathcal{D}_{dy_j}^2\eta_2)^v)\nonumber\\
&\stackrel{(2.5)}{=}\sum_{j=1}^{n_2}g_2(\mathcal{D}_{dy_j}^2\eta_2,\mathcal{D}_{dy_j}^2\eta_2)^v\nonumber\\
&=(|\mathcal{D}^2\eta_2|^2)^v,
\end{align}
After using $(3.20)$ and $(3.21)$ in $(3.19)$, provide that 
\begin{equation}
|\mathcal{D}\eta|^2=(|\mathcal{D}^1\eta_1|^2)^h+(|\mathcal{D}^2\eta_2|^2)^v.
\end{equation}
Thus from Theorem 3.10 and equation $(3.22)$, follows the result.
\end{proof}
\section{2-Killing 1-form}
A 1-form $\eta\in\Omega^1(\tilde{M})$ on a Riemannian Poisson manifold $(\tilde{M},g,\Pi)$ is said to be 2-Killing 1-form with corresponding to the metric $g$ if
\begin{equation}
\mathcal{L}_\eta\mathcal{L}_\eta g=0,
\end{equation} 
where $\mathcal{L}_\eta$ is the Lie derivative on $\tilde{M}$ corresponding to 1-form $\eta$.\\ 
The following proposition is alike to the Proposition 3.1 of (\cite{ssb},p. 6).
\begin{proposition}
Let $(\tilde{M},g,\Pi)$ be a Riemannian Poisson manifold and 1-form $\eta\in\Omega^1(\tilde{M})$. Then
\begin{align}
(\mathcal{L}_\eta\mathcal{L}_\eta g)(\alpha,\beta)&=g(\mathcal{D}_\eta\mathcal{D}_\alpha\eta-\mathcal{D}_{[\eta,\alpha]_\Pi}\eta,\beta)\nonumber\\
&+g(\mathcal{D}_\eta\mathcal{D}_\beta\eta-\mathcal{D}_{[\eta,\beta]_\Pi}\eta,\alpha)+2g(\mathcal{D}_\alpha\eta,\mathcal{D}_\beta\eta),	
\end{align}
for any $\alpha,\beta\in\Omega^1(\tilde{M})$. 
\end{proposition}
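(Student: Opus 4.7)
The plan is to compute $\mathcal{L}_\eta(\mathcal{L}_\eta g)$ by iterating the Lie-derivative formula (3.1), starting from the already-established identity
\[
(\mathcal{L}_\eta g)(\alpha,\beta)=g(\mathcal{D}_\alpha\eta,\beta)+g(\mathcal{D}_\beta\eta,\alpha),
\]
which appears as (3.3) in the proof of Proposition 3.3. I would set $h:=\mathcal{L}_\eta g$, which is again a $(2,0)$-tensor, and apply (3.1) to $h$:
\[
(\mathcal{L}_\eta h)(\alpha,\beta)=\sharp_{\Pi}(\eta)\bigl(h(\alpha,\beta)\bigr)-h([\eta,\alpha]_\Pi,\beta)-h(\alpha,[\eta,\beta]_\Pi).
\]

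The next step is to substitute the explicit expression for $h$ and expand the anchor derivative using the metric condition (ii). For the first summand this yields four terms of the form $g(\mathcal{D}_\eta\mathcal{D}_\alpha\eta,\beta)$, $g(\mathcal{D}_\alpha\eta,\mathcal{D}_\eta\beta)$, $g(\mathcal{D}_\eta\mathcal{D}_\beta\eta,\alpha)$, $g(\mathcal{D}_\beta\eta,\mathcal{D}_\eta\alpha)$. The remaining pieces $h([\eta,\alpha]_\Pi,\beta)$ and $h(\alpha,[\eta,\beta]_\Pi)$ each split into two summands by the definition of $h$. I would then group the result into three blocks: the $\alpha$-block, the $\beta$-block, and the mixed ``cross'' block.

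The key simplification uses the torsion-free property (i), $[\eta,\alpha]_\Pi=\mathcal{D}_\eta\alpha-\mathcal{D}_\alpha\eta$ and likewise for $\beta$. Concretely, combining
\[
g(\mathcal{D}_\alpha\eta,\mathcal{D}_\eta\beta)-g(\mathcal{D}_\alpha\eta,[\eta,\beta]_\Pi)=g(\mathcal{D}_\alpha\eta,\mathcal{D}_\beta\eta)
\]
(and the symmetric statement with $\alpha$ and $\beta$ swapped) collapses the four cross terms into the single contribution $2g(\mathcal{D}_\alpha\eta,\mathcal{D}_\beta\eta)$. The $\alpha$-block assembles into $g(\mathcal{D}_\eta\mathcal{D}_\alpha\eta-\mathcal{D}_{[\eta,\alpha]_\Pi}\eta,\beta)$ and the $\beta$-block into $g(\mathcal{D}_\eta\mathcal{D}_\beta\eta-\mathcal{D}_{[\eta,\beta]_\Pi}\eta,\alpha)$, which is exactly the right-hand side of the stated formula.

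The bookkeeping of eight terms from two applications of the Lie derivative is the main obstacle: one must expand, distribute $\sharp_\Pi(\eta)$ via the metric condition on each factor of $h$, and then use the torsion-free identity in just the right way so that the cross terms collapse. Once the terms are carefully catalogued, no curvature or Poisson-compatibility assumption beyond the definitions of $\mathcal{D}$ and $h$ is needed; in particular, the condition $\mathcal{D}\Pi=0$ is not invoked in the derivation, so the identity holds on any pair $(\tilde{M},\Pi)$ equipped with a contravariant Levi-Civita connection, although it is natural to state it in the Riemannian Poisson setting.
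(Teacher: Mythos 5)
Your proof is correct and is precisely the computation the paper leaves implicit: Proposition 4.1 is stated without proof, with only the remark that it is ``alike to'' Proposition 3.1 of Shenawy--\"Unal, and your route --- iterating formula (3.1) on $h=\mathcal{L}_\eta g$, expanding $\sharp_\Pi(\eta)$ via the metric condition, and collapsing the four cross terms to $2g(\mathcal{D}_\alpha\eta,\mathcal{D}_\beta\eta)$ via torsion-freeness --- is exactly the intended adaptation of that argument to the contravariant setting. Your closing observation that $\mathcal{D}\Pi=0$ is never invoked is also accurate.
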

The following proposition is helpful to describe the definition of 2-Killing 1-form on the Riemannian Poisson manifold.
\begin{proposition}
Let $(\tilde{M},g,\Pi)$ be a Riemannian Poisson manifold and 1-form $\eta\in\Omega^1(\tilde{M})$. Then $\eta$ is 2-Killing 1-form if and only if
\begin{equation}
\mathcal{R}(\eta,\alpha,\alpha,\eta)=g(\mathcal{D}_\alpha\eta,\mathcal{D}_\alpha\eta)+g(\mathcal{D}_\alpha\mathcal{D}_\eta\eta,\alpha),
\end{equation}
for any $\alpha\in\Omega^1(\tilde{M})$.
\end{proposition}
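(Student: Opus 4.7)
The plan is to derive the identity from Proposition 4.1 by a standard polarization argument, then convert the resulting expression into curvature form. First I would observe that the expression for $(\mathcal{L}_\eta \mathcal{L}_\eta g)(\alpha,\beta)$ in Proposition 4.1 is manifestly symmetric in $(\alpha,\beta)$: the first two bracketed terms exchange roles under $\alpha \leftrightarrow \beta$, and the third term $2 g(\mathcal{D}_\alpha \eta, \mathcal{D}_\beta \eta)$ is obviously symmetric. Therefore $\mathcal{L}_\eta \mathcal{L}_\eta g = 0$ is equivalent to $(\mathcal{L}_\eta \mathcal{L}_\eta g)(\alpha,\alpha)=0$ for every $\alpha \in \Omega^1(\tilde{M})$, exactly as in the proof of Proposition 3.3.

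Next I would set $\alpha = \beta$ in Proposition 4.1 to obtain
\begin{equation*}
\tfrac{1}{2}(\mathcal{L}_\eta \mathcal{L}_\eta g)(\alpha,\alpha) = g\bigl(\mathcal{D}_\eta \mathcal{D}_\alpha \eta - \mathcal{D}_{[\eta,\alpha]_\Pi}\eta,\,\alpha\bigr) + g(\mathcal{D}_\alpha\eta, \mathcal{D}_\alpha\eta).
\end{equation*}
I would then use the very definition of the curvature tensor $\mathcal{R}$ from Section 2 to identify the first term. Specifically, writing
\begin{equation*}
\mathcal{R}(\eta,\alpha)\eta = \mathcal{D}_\eta \mathcal{D}_\alpha \eta - \mathcal{D}_\alpha \mathcal{D}_\eta \eta - \mathcal{D}_{[\eta,\alpha]_\Pi}\eta,
\end{equation*}
and substituting, I get
\begin{equation*}
\tfrac{1}{2}(\mathcal{L}_\eta \mathcal{L}_\eta g)(\alpha,\alpha) = g\bigl(\mathcal{R}(\eta,\alpha)\eta,\alpha\bigr) + g(\mathcal{D}_\alpha \mathcal{D}_\eta \eta, \alpha) + g(\mathcal{D}_\alpha \eta, \mathcal{D}_\alpha \eta).
\end{equation*}

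The final step is to convert $g(\mathcal{R}(\eta,\alpha)\eta,\alpha)$ into the four-index form $\mathcal{R}(\eta,\alpha,\alpha,\eta) = g(\mathcal{R}(\eta,\alpha)\alpha,\eta)$. This uses antisymmetry of $\mathcal{R}$ in the last two slots, which follows from the metric-compatibility condition (ii) of the contravariant Levi-Civita connection by the usual computation (differentiating $\sharp_\Pi(\omega)\sharp_\Pi(\eta) g(\gamma,\delta)$, antisymmetrizing in $(\omega,\eta)$, and invoking $\sharp_\Pi([\omega,\eta]_\Pi) = [\sharp_\Pi(\omega),\sharp_\Pi(\eta)]$). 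It yields $g(\mathcal{R}(\eta,\alpha)\eta,\alpha) = -g(\mathcal{R}(\eta,\alpha)\alpha,\eta) = -\mathcal{R}(\eta,\alpha,\alpha,\eta)$. Plugging this in and equating $\tfrac{1}{2}(\mathcal{L}_\eta\mathcal{L}_\eta g)(\alpha,\alpha)$ to zero gives exactly (4.3).

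The computation is essentially algebraic once Proposition 4.1 is granted; the only genuinely nontrivial point is the sign-tracking via antisymmetry of $\mathcal{R}$ in its last two slots. In the Poisson setting this symmetry is not automatic and must be checked from the metric condition, so I would record it as a brief separate observation before applying it. With the convention fixed, both implications of the biconditional fall out simultaneously from the chain of equivalences above.
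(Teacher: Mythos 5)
Your proof is correct and follows essentially the same route as the paper: reduce to $(\mathcal{L}_\eta\mathcal{L}_\eta g)(\alpha,\alpha)=0$ by symmetry of Proposition 4.1, set $\alpha=\beta$, and identify the curvature term. The only cosmetic difference is that you convert $g(\mathcal{R}(\eta,\alpha)\eta,\alpha)$ to $-\mathcal{R}(\eta,\alpha,\alpha,\eta)$ via skew-symmetry in the last two slots (which you rightly flag as needing verification from the metric condition), whereas the paper invokes the pair symmetry $\mathcal{R}(\eta,\alpha,\alpha,\eta)=\mathcal{R}(\alpha,\eta,\eta,\alpha)=g(\mathcal{R}(\alpha,\eta)\eta,\alpha)$ and expands that expression instead; both give the same sign and the same conclusion.
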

\begin{proof}
The symmetry of $(4.2)$ implies that, $\eta$ is 2-Killing 1-form if and only if $(\mathcal{L}_\eta\mathcal{L}_\eta g)(\alpha,\alpha)=0$, for any $\alpha\in\Omega^1(\tilde{M})$. Therefore, we have
\begin{equation}
g(\mathcal{D}_\eta\mathcal{D}_\alpha\eta,\alpha)+g(\mathcal{D}_{[\alpha,\eta]_\Pi}\eta,\alpha)+g(\mathcal{D}_\alpha\eta,\mathcal{D}_\alpha\eta)=0,
\end{equation}
for any $\alpha\in\Omega^1(\tilde{M})$.
The curvature tensor $\mathcal{R}$, is given by
\begin{align}
\mathcal{R}(\eta,\alpha,\alpha,\eta)&=\mathcal{R}(\alpha,\eta,\eta,\alpha)\nonumber\\
&=g(\mathcal{R}(\alpha,\eta)\eta,\alpha)\nonumber\\
&=g(\mathcal{D}_\alpha\mathcal{D}_\eta\eta,\alpha)-g(\mathcal{D}_\eta\mathcal{D}_\alpha\eta,\alpha)-g(\mathcal{D}_{[\alpha,\eta]_\Pi}\eta,\alpha).
\end{align}
After using $(4.4)$ in $(4.5),$ provides the result $(4.3)$.
\end{proof}
There is another characterization for a 2-Killing 1-form $\eta$ on Riemannian Poisson manifold $(\tilde{M},g,\Pi)$
\begin{equation}
2\mathcal{R}(\eta,\alpha,\beta,\eta)=2g(\mathcal{D}_\alpha\eta,\mathcal{D}_\beta\eta)+g(\mathcal{D}_\alpha\mathcal{D}_\eta\eta,\beta)+g(\mathcal{D}_\beta\mathcal{D}_\eta\eta,\alpha),
\end{equation} 
for any $\alpha,\beta\in\Omega^1(\tilde{M})$.\par
In the following two theorems, we will provide Bochner-type results for 2-Killing 1-form on compact Riemannian Poisson manifold and compact Riemannian Poisson warped product space.
\begin{theorem}
Let $\eta$ is a 2-Killing 1-form on $n$-dimensional compact Riemannian Poisson manifold $(\tilde{M},g,\Pi)$ with vanishing $\mathcal{D}_\eta\eta$. If $Ric(\eta,\eta)\leq0$, then $\eta$ is parallel. 
\end{theorem}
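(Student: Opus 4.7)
The plan is to reduce the theorem to the pointwise algebraic identity already encoded in the 2-Killing condition, rather than running the full Bochner--Laplacian argument used for Theorem 3.10. Concretely, I would start from the characterization \((4.3)\) of Proposition 4.2, specialize it using the hypothesis $\mathcal{D}_\eta \eta = 0$, trace against a local orthonormal coframe, and read off the conclusion.

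In more detail, since $\eta$ is a 2-Killing 1-form, Proposition 4.2 yields
\[
\mathcal{R}(\eta,\alpha,\alpha,\eta) = g(\mathcal{D}_\alpha \eta, \mathcal{D}_\alpha \eta) + g(\mathcal{D}_\alpha \mathcal{D}_\eta \eta, \alpha)
\]
for every $\alpha \in \Omega^1(\tilde{M})$. The hypothesis that $\mathcal{D}_\eta \eta \equiv 0$ as a global 1-form lets me differentiate again: $\mathcal{D}_\alpha \mathcal{D}_\eta \eta = \mathcal{D}_\alpha(0) = 0$, which kills the second term. At an arbitrary point $p \in \tilde{M}$ I would then choose a local orthonormal coframe $\{\theta_1, \ldots, \theta_n\}$ of $T_p^*\tilde{M}$, set $\alpha = \theta_i$, and sum over $i$. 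Invoking the definition $(2.2)$ of the Ricci curvature, this gives the pointwise identity
\[
Ric(\eta,\eta) = \sum_{i=1}^{n} \mathcal{R}(\eta,\theta_i,\theta_i,\eta) = \sum_{i=1}^{n} |\mathcal{D}_{\theta_i}\eta|^2 = |\mathcal{D}\eta|^2.
\]
Combined with the hypothesis $Ric(\eta,\eta)\le 0$ and the obvious $|\mathcal{D}\eta|^2 \ge 0$, both sides must vanish identically, so $\mathcal{D}\eta = 0$, i.e.\ $\eta$ is parallel.

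The main (and essentially only) obstacle is treating the term $g(\mathcal{D}_\alpha \mathcal{D}_\eta \eta, \alpha)$ in $(4.3)$ correctly; the assumption on $\mathcal{D}_\eta\eta$ is tailored to make it disappear, so no integration by parts is needed. Notably, this direct argument does not actually invoke the compactness of $\tilde{M}$; if one wishes to mirror the structure of Theorem 3.10, an alternative route is to derive $\Delta^{\mathcal{D}}\!\left(-\tfrac{1}{2}|\eta|^2\right) = |\mathcal{D}\eta|^2 - Ric(\eta,\eta) \ge 0$ from the same 2-Killing identity and then apply Lemma 3.9 to conclude, and this is where the compactness hypothesis would enter. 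Either route delivers the same conclusion, and the structural payoff compared with the Killing case is that the 2-Killing condition is already an algebraic Bochner identity, trading the integration step for the single tracing step above.
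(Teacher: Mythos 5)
Your argument is correct and is essentially identical to the paper's own proof: trace the 2-Killing identity of Proposition 4.2 over a local orthonormal coframe, use the hypothesis $\mathcal{D}_\eta\eta=0$ to eliminate the second term, and conclude $Ric(\eta,\eta)=|\mathcal{D}\eta|^2\leq 0$, hence $\mathcal{D}\eta=0$. Your side observation that compactness is never actually invoked applies equally to the paper's proof, which likewise makes no use of it.
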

\begin{proof}
Assume that $\{dx_1,...,dx_{n}\}$ is a local $g$-coframe on an open subset $U$ of $\tilde{M}$, then from Proposition 4.2, we obtain
\begin{equation*}
\sum_{i=1}^{n}\mathcal{R}(\eta,dx_i,dx_i,\eta)=\sum_{i=1}^{n}g(\mathcal{D}_{dx_i}\eta,\mathcal{D}_{dx_i}\eta)+\sum_{i=1}^{n}g(\mathcal{D}_{dx_i}\mathcal{D}_\eta\eta,dx_i).
\end{equation*}
As $\mathcal{D}_\eta\eta$ vanishes and $\mathcal{R}$ is a curvature tensor therefore the last equation implies that 
\begin{equation*}
Ric(\eta,\eta)=|\mathcal{D}\eta|^2\leq0.
\end{equation*}
This follows the result.
\end{proof}
\begin{theorem}
	Let $(M=\tilde{M_1}\times_f\tilde{M_2},g^f,\Pi)$ be a compact Riemannian Poisson warped product space and $f$ is a Casimir function on $B$ also let 1-form $\eta=\eta_1^h+\eta_2^v\in\Omega^1(\tilde{M})$. Then\\
	\textbf{(1)}. $\eta=\eta_1^h+\eta_2^v$ is parallel if the 1-form $\eta_i$ is a 2-Killing 1-form, $Ric_i(\eta_i,\eta_i)\leq0$ and $\mathcal{D}_{\eta_i}^i\eta_i$ vanishes, $i=1,2$.\\
	\textbf{(2)}. $\eta=\eta_1^h$ is parallel if the 1-form $\eta_1$ is a 2-Killing 1-form, $Ric_1(\eta_1,\eta_1)\leq0$ and $\mathcal{D}_{\eta_1}^1\eta_1$ vanishes.\\
	\textbf{(3)}. $\eta=\eta_2^v$ is parallel if the 1-form $\eta_2$ is a 2-Killing 1-form, $Ric_2(\eta_2,\eta_2)\leq0$ and $\mathcal{D}_{\eta_2}^2\eta_2$ vanishes.
\end{theorem}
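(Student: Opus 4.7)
The plan is to run the same argument as Theorem 3.11, substituting Theorem 4.4 for Theorem 3.10 as the Bochner-type input on each factor. First I would observe that compactness of $\tilde{M}=\tilde{M_1}\times_f\tilde{M_2}$ together with Theorem 2.3 ensures that $(\tilde{M_1},g_1,\Pi_1)$ and $(\tilde{M_2},g_2,\Pi_2)$ are themselves compact Riemannian Poisson manifolds, so Theorem 4.4 is available on each side. Under the hypotheses of part (1) — namely each $\eta_i$ is a 2-Killing 1-form on $\tilde{M}_i$ with $\mathcal{D}^i_{\eta_i}\eta_i=0$ and $Ric_i(\eta_i,\eta_i)\leq 0$ — I would apply Theorem 4.4 to each factor to conclude $\mathcal{D}^i\eta_i=0$, equivalently $|\mathcal{D}^i\eta_i|^2=0$.

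The second step is to recycle the decomposition (3.22) proved inside Theorem 3.11. Inspecting its derivation, the key input is that when $f$ is a Casimir function, $J_1 df=0$, so Proposition 2.2 collapses to
\begin{equation*}
\mathcal{D}_{dx_i^h}\eta=(\mathcal{D}^1_{dx_i}\eta_1)^h,\qquad \mathcal{D}_{dy_j^v}\eta=(\mathcal{D}^2_{dy_j}\eta_2)^v,
\end{equation*}
and no Killing hypothesis on $\eta$ is ever used. Therefore equation (3.22) holds for an arbitrary $\eta=\eta_1^h+\eta_2^v\in\Omega^1(\tilde{M})$, giving
\begin{equation*}
|\mathcal{D}\eta|^2=(|\mathcal{D}^1\eta_1|^2)^h+(|\mathcal{D}^2\eta_2|^2)^v.
\end{equation*}
Combining this with the vanishing $|\mathcal{D}^i\eta_i|^2=0$ from the first step forces $|\mathcal{D}\eta|^2\equiv 0$, so $\eta$ is parallel on $\tilde{M}$, proving (1).

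For parts (2) and (3) I would specialize the same argument: take $\eta_2=0$ (resp.\ $\eta_1=0$). Then the corresponding term in the decomposition (3.22) vanishes trivially, while the surviving factor satisfies the hypotheses of Theorem 4.4 and contributes $|\mathcal{D}^i\eta_i|^2=0$. Hence $|\mathcal{D}\eta|^2=0$ and $\eta$ is parallel.

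The main obstacle I foresee is the bookkeeping check that equation (3.22), which was established inside the proof of Theorem 3.11 under the nominal assumption that $\eta$ is Killing, is in fact valid for arbitrary 1-forms once $f$ is Casimir. This is not deep but must be stated explicitly, since otherwise the reader could worry that a Killing hypothesis has silently been imported into the 2-Killing theorem. Once this is observed, the rest of the proof is a direct application of Theorem 4.4 on each factor together with the decomposition.
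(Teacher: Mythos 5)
Your argument is exactly the paper's: the published proof is simply ``similar to Theorem 3.11,'' i.e.\ combine the decomposition $(3.22)$, $|\mathcal{D}\eta|^2=(|\mathcal{D}^1\eta_1|^2)^h+(|\mathcal{D}^2\eta_2|^2)^v$, with the single-factor Bochner result, and your observation that $(3.22)$ holds for an arbitrary $\eta=\eta_1^h+\eta_2^v$ once $f$ is Casimir (so that $J_1df=0$ kills the cross terms in Proposition 2.2) is the correct and necessary justification. The only slip is a label: the Bochner-type input to be applied on each compact factor is Theorem 4.3 (the 2-Killing analogue of Theorem 3.10), not Theorem 4.4, which is the statement being proved.
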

\begin{proof}
Proof is similar to the Theorem 3.11.
\end{proof}
In the following two propositions, we will find the expression for 2-Killing 1-form.
\begin{proposition}
Let $(\tilde{M}=\tilde{M_1}\times_f \tilde{M_2},g^f)$ be a contravariant warped product space and $\mathcal{D}$ is the contravariant Levi-Civita connection associated with pair $(g^f,\Pi)$ on $\tilde{M}$. Then for any 1-forms $\eta\in\Omega^1(\tilde{M})$, we have
\begin{align*}
(\mathcal{L}_\eta\mathcal{L}_\eta g^f)&(\alpha,\beta)=\big[(\mathcal{L}_{\eta_1}^1\mathcal{L}_{\eta_1}^1 g_1)(\alpha_1,\beta_1)\big]^h+\frac{1}{(f^h)^2}\big[(\mathcal{L}_{\eta_2}^2\mathcal{L}_{\eta_2}^2 g_2)(\alpha_2,\beta_2)\big]^v\\
&+2\Big(\mathcal{D}_{\eta_1}^1(\frac{g_1(J_1df,\eta_1)}{f^3})+\frac{2g_1(J_1df,\eta_1)^2}{f^4}\Big)^hg_2(\alpha_2,\beta_2)^v\\
&+2\Big(\frac{\mathcal{D}_{\eta_1}^1(f)g_1(J_1df,\beta_1)}{f^4}+\frac{g_1(J_1df,\beta_1)g_1(J_1df,\eta_1)}{f^4}\Big)^hg_2(\alpha_2,\eta_2)^v\\
&+2\Big(\frac{\mathcal{D}_{\eta_1}^1(f)g_1(J_1df,\alpha_1)}{f^4}+\frac{g_1(J_1df,\alpha_1)g_1(J_1df,\eta_1)}{f^4}\Big)^hg_2(\beta_2,\eta_2)^v\\
&+4\Big(\frac{g_1(J_1df,\eta_1)}{f^3}\Big)^h\big((\mathcal{L}_{\eta_2}^2g_2)(\alpha_2,\beta_2)\big)^v+2\Big(\frac{g_1(J_1df,\alpha_1)}{f^3}\Big)^hg_2(\eta_2,\mathcal{D}_{\beta_2}^2\eta_2)^v\\
&+2\Big(\frac{g_1(J_1df,\beta_1)}{f^3}\Big)^hg_2(\eta_2,\mathcal{D}_{\alpha_2}^2\eta_2)^v+4\Big(\frac{g_1(J_1df,\alpha_1)g_1(J_1df,\beta_1)}{f^4}\Big)^h(||\eta_2||_2^2)^v,
\end{align*}
for any 1-forms $\alpha,\beta\in\Omega^1(\tilde{M})$.
\end{proposition}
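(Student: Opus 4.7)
The plan is to apply the double Lie-derivative identity of Proposition 4.1 on $(\tilde M, g^f, \Pi)$ and then expand every occurrence of $\mathcal{D}$ via the warped-product decomposition of Proposition 2.2, finally grouping the output by tensor type. Concretely, Proposition 4.1 gives
\begin{align*}
(\mathcal{L}_\eta \mathcal{L}_\eta g^f)(\alpha, \beta) &= g^f\bigl(\mathcal{D}_\eta \mathcal{D}_\alpha \eta - \mathcal{D}_{[\eta,\alpha]_\Pi}\eta,\beta\bigr) + g^f\bigl(\mathcal{D}_\eta \mathcal{D}_\beta \eta - \mathcal{D}_{[\eta,\beta]_\Pi}\eta,\alpha\bigr) \\
&\quad + 2 g^f(\mathcal{D}_\alpha \eta, \mathcal{D}_\beta \eta),
\end{align*}
so the whole computation reduces to an explicit evaluation of the three ingredients $\mathcal{D}_\alpha \eta$, $\mathcal{D}_\eta\mathcal{D}_\alpha \eta$ and $\mathcal{D}_{[\eta,\alpha]_\Pi}\eta$ in the warped product.

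First I would substitute the explicit formula $(3.4)$ for $\mathcal{D}_\alpha\eta$ (already established in the proof of Proposition 3.4) into the term $2g^f(\mathcal{D}_\alpha \eta,\mathcal{D}_\beta \eta)$. Because $g^f$ is block-orthogonal ($g^f$ vanishes on a horizontal/vertical pair), the pairing splits cleanly into a horizontal-horizontal piece and a vertical-vertical piece; distributing $g_2$ over the nine vertical cross products of the three vertical summands of $\mathcal{D}_\alpha\eta$ with those of $\mathcal{D}_\beta\eta$ already produces the $g_2(\eta_2,\mathcal{D}_{\beta_2}^2\eta_2)^v$ and $g_2(\eta_2,\mathcal{D}_{\alpha_2}^2\eta_2)^v$ summands, the $(||\eta_2||_2^2)^v$ contribution, and the $g_1(J_1df,\eta_1)^2/f^4$ piece of the coefficient of $g_2(\alpha_2,\beta_2)^v$. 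In parallel, I would verify that the Koszul bracket decomposes as $[\eta,\alpha]_\Pi = [\eta_1,\alpha_1]_{\Pi_1}^h + [\eta_2,\alpha_2]_{\Pi_2}^v$; the cross horizontal-vertical Koszul brackets vanish because $\sharp_\Pi(\omega_1^h+\omega_2^v)=\sharp_{\Pi_1}(\omega_1)^h+\sharp_{\Pi_2}(\omega_2)^v$, a horizontal vector field annihilates a vertical form under the usual Lie derivative, and $\Pi(\eta_1^h,\alpha_2^v)=0$. This reduces $\mathcal{D}_{[\eta,\alpha]_\Pi}\eta$ to a factor-wise sum on which Proposition 2.2 is applied again.

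Next I would expand $\mathcal{D}_\eta\mathcal{D}_\alpha\eta$ by differentiating each of the five summands of $(3.4)$ via Proposition 2.2 and the Leibniz rule $\mathcal{D}_\eta(hY)=\sharp_\Pi(\eta)(h)Y + h\mathcal{D}_\eta Y$. This step is precisely what generates the coefficients $\mathcal{D}_{\eta_1}^1(f)/f^4$ and $\mathcal{D}_{\eta_1}^1(g_1(J_1df,\eta_1)/f^3)$ that appear in the target, from $\sharp_{\Pi_1}(\eta_1)$ hitting the various powers of $1/f$ and the $g_1(J_1df,\eta_1)$ factor. Once these are assembled together with the factor-wise pieces coming from $\mathcal{D}_{[\eta,\alpha]_\Pi}\eta$, the pure-$\tilde M_1$ and pure-$\tilde M_2$ contributions collapse, by Proposition 4.1 applied internally on each factor, to $[(\mathcal{L}_{\eta_1}^1\mathcal{L}_{\eta_1}^1 g_1)(\alpha_1,\beta_1)]^h$ and $\frac{1}{(f^h)^2}[(\mathcal{L}_{\eta_2}^2\mathcal{L}_{\eta_2}^2 g_2)(\alpha_2,\beta_2)]^v$, respectively.

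The main obstacle will be the bookkeeping: roughly twenty distinct $J_1df$-cross products appear, and several of them --- most notably the pairing of the $(J_1df)^h$-summand in $\mathcal{D}_\alpha\eta$ with the corresponding one in $\mathcal{D}_\beta\eta$, which produces a term proportional to $g_2(\alpha_2,\eta_2)g_2(\beta_2,\eta_2)g_1(J_1df,J_1df)/f^6$ --- are not present in the target and must cancel against similar terms coming from $g^f(\mathcal{D}_\eta\mathcal{D}_\alpha\eta,\beta)$ and $g^f(\mathcal{D}_{[\eta,\alpha]_\Pi}\eta,\beta)$. These cancellations should follow from the skew-symmetry of $J_1$, from $\mathcal{D}^1 J_1 = 0$ and from the metric compatibility of $\mathcal{D}^1$. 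A convenient running sanity check is the Casimir limit: whenever $f$ is a Casimir function on $\tilde M_1$ so that $J_1 df=0$, every mixed term has to disappear, which is consistent with the Casimir analogues obtained in Propositions 3.2 and 3.5 and is a reliable way to catch coefficient errors during the expansion.
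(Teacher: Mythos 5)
Your proposal follows essentially the same route as the paper's Appendix proof: both start from the five-term expansion supplied by Proposition 4.1, split the Koszul bracket factor-wise, expand $\mathcal{D}_\alpha\eta$, $\mathcal{D}_\eta\mathcal{D}_\alpha\eta$ and $\mathcal{D}_{[\eta,\alpha]_\Pi}\eta$ via Proposition 2.2, and reassemble, and the $||J_1df||_1^2/f^6$ cross terms you flag do cancel between the $g^f(\mathcal{D}_\eta\mathcal{D}_\alpha\eta,\beta)$-type terms and $2g^f(\mathcal{D}_\alpha\eta,\mathcal{D}_\beta\eta)$ exactly as you predict. The only difference is that you stop at the roadmap while the paper writes out the full expansions $S_1$, $S_2$ and $P_1,\dots,P_5$; no step of your plan would fail.
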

\begin{proof}
See Appendix.
\end{proof}
\begin{proposition}
	Let $(\tilde{M}=\tilde{M_1}\times_f\tilde{M_2},g^f,\Pi)$ be a Riemannian Poisson warped product space and $f$ is a Casimir function on $\tilde{M_1}$. Then for any 1-forms $\eta\in\Omega^1(\tilde{M})$, we have
	\begin{equation*}
	(\mathcal{L}_\eta\mathcal{L}_\eta g^f)(\alpha,\beta)=\big[(\mathcal{L}_{\eta_1}^1\mathcal{L}_{\eta_1}^1 g_1)(\alpha_1,\beta_1)\big]^h+\frac{1}{(f^h)^2}\big[(\mathcal{L}_{\eta_2}^2\mathcal{L}_{\eta_2}^2 g_2)(\alpha_2,\beta_2)\big]^v,\\
	\end{equation*}
	for any $\alpha,\beta\in\Omega^1(M)$.
\end{proposition}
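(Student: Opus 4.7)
The plan is to deduce this proposition as a direct corollary of Proposition 4.6, exactly in the spirit of how Proposition 3.2 was obtained from Proposition 3.1 and Proposition 3.5 from Proposition 3.4. The starting point is the observation, already recorded just after Proposition 3.1, that $f$ being a Casimir function on $\tilde{M_1}$ is equivalent to $J_1 df=0$.

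First I would write down the explicit formula of Proposition 4.6 and inspect each of the eight terms appearing in addition to the two "clean" summands $[(\mathcal{L}^1_{\eta_1}\mathcal{L}^1_{\eta_1}g_1)(\alpha_1,\beta_1)]^h$ and $\frac{1}{(f^h)^2}[(\mathcal{L}^2_{\eta_2}\mathcal{L}^2_{\eta_2}g_2)(\alpha_2,\beta_2)]^v$. Every one of these extra terms carries either an explicit factor of the form $g_1(J_1 df,\cdot)^h$ or a factor $\mathcal{D}^1_{\eta_1}(f)^h$ (possibly combined with a second such factor). Thus the whole task reduces to checking that both kinds of factors vanish under the Casimir hypothesis.

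The vanishing of the factors $g_1(J_1 df,\xi)$ for any $\xi\in\Omega^1(\tilde{M_1})$ is immediate from $J_1 df=0$. For the factor $\mathcal{D}^1_{\eta_1}(f)$ I would use the identity $\mathcal{D}^1_{\eta_1}(f)=\sharp_{\Pi_1}(\eta_1)(f)=df(\sharp_{\Pi_1}(\eta_1))$ from the preliminaries, together with the anchor identity $df(\sharp_{\Pi_1}(\eta_1))=\Pi_1(\eta_1,df)=-g_1(\eta_1,J_1 df)$ coming from the definition of the endomorphism $J_1$; under the Casimir assumption this chain of equalities terminates at $0$. Hence all eight extra terms in Proposition 4.6 drop out.

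I do not anticipate any real obstacle: the content of the statement is purely the clean-up of Proposition 4.6 under $J_1df=0$, and no new computation or integration by parts is involved. The only point where one must be slightly careful is to check that the terms involving $\mathcal{D}^1_{\eta_1}(f)$ (coming e.g. from differentiating the warping factor $\frac{1}{(f^h)^2}$ along a horizontal direction) really are governed by the Casimir condition; this is handled by the short computation with the anchor map sketched above. After substituting $J_1 df=0$ everywhere, only the two surviving terms remain, which is exactly the stated formula.
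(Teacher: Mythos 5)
Your proposal is correct and is essentially the paper's own argument: the paper proves this statement (its Proposition 4.6) in one line by substituting the Casimir condition $J_1df=0$ into the long formula of Proposition 4.5, exactly the clean-up you describe (your reference to ``Proposition 4.6'' as the source formula should read Proposition 4.5). Your extra check that $\mathcal{D}^1_{\eta_1}(f)=-g_1(\eta_1,J_1df)=0$ is correct but not even needed, since every term containing that factor also carries an explicit $g_1(J_1df,\cdot)$ factor.
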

\begin{proof}
Using the property of Casimir function $f$ in Proposition 4.5, provides this result.
\end{proof}
In the following theorem, we will provide necessary and sufficient conditions for 2-Killing 1-form on Riemannian Poisson warped product space.
\begin{theorem}
	Let $(\tilde{M}=\tilde{M_1}\times_f\tilde{M_2},g^f,\Pi)$ be a Riemannian Poisson warped product space and $f$ is a Casimir function on $\tilde{M_1}$. Then 1-form $\eta\in\Omega^1(\tilde{M})$ is 2-Killing 1-form if and only if the following conditions holds:\\
	\textbf{(1)}.$\:\eta_1$ is a 2-Killing 1-form on $\tilde{M_1}$.\\
	\textbf{(2)}.$\:\eta_2$ is a 2-Killing 1-form on $\tilde{M_2}$.
\end{theorem}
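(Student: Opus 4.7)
The plan is to deduce Theorem 4.8 directly from Proposition 4.7, in close analogy with the way Theorem 3.6 followed from Proposition 3.2. Under the Casimir hypothesis on $f$, Proposition 4.7 supplies the clean splitting
\[
(\mathcal{L}_\eta\mathcal{L}_\eta g^f)(\alpha,\beta)=\bigl[(\mathcal{L}_{\eta_1}^1\mathcal{L}_{\eta_1}^1 g_1)(\alpha_1,\beta_1)\bigr]^h+\frac{1}{(f^h)^2}\bigl[(\mathcal{L}_{\eta_2}^2\mathcal{L}_{\eta_2}^2 g_2)(\alpha_2,\beta_2)\bigr]^v
\]
for every pair $\alpha=\alpha_1^h+\alpha_2^v$ and $\beta=\beta_1^h+\beta_2^v$ in $\Omega^1(\tilde M)$. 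All of the genuine computational work is absorbed in Proposition 4.5 and its Casimir collapse; what remains is to decouple the two summands on the right by a suitable choice of test 1-forms.

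The "if" direction is immediate: if $\eta_1$ and $\eta_2$ are 2-Killing on the respective factors, then $\mathcal{L}_{\eta_i}^i\mathcal{L}_{\eta_i}^i g_i$ vanishes identically for $i=1,2$, so the right-hand side of the above identity is zero for every $(\alpha,\beta)$, and hence $\mathcal{L}_\eta\mathcal{L}_\eta g^f=0$. For the "only if" direction, I would specialise the test 1-forms so as to isolate each summand. Testing with purely horizontal $\alpha=\alpha_1^h$, $\beta=\beta_1^h$ forces $\alpha_2=\beta_2=0$ and kills the vertical summand, leaving $\bigl[(\mathcal{L}_{\eta_1}^1\mathcal{L}_{\eta_1}^1 g_1)(\alpha_1,\beta_1)\bigr]^h=0$; injectivity of the horizontal lift together with the arbitrariness of $\alpha_1,\beta_1\in\Omega^1(\tilde{M_1})$ then yields $\mathcal{L}_{\eta_1}^1\mathcal{L}_{\eta_1}^1 g_1=0$, which is condition (1). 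An entirely symmetric test against purely vertical $\alpha=\alpha_2^v$, $\beta=\beta_2^v$ kills the horizontal summand and, after absorbing the non-vanishing scalar factor $1/(f^h)^2$, delivers condition (2).

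There is no genuine obstacle at this stage: the heavy lifting has been pushed upstream into Proposition 4.5 and its Casimir reduction in Proposition 4.7, after which Theorem 4.8 is essentially a two-line specialisation of the test 1-forms, precisely mirroring the argument for Killing 1-forms in Theorem 3.6. If any subtlety arises it would only be notational, namely verifying that the horizontal and vertical lift maps are indeed pointwise injective on 1-forms so that the factorwise identities are inherited from the lifted ones; this is standard in the warped product setup already used throughout Sections 2 and 3.
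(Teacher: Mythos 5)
Your proposal is correct and follows essentially the same route as the paper: both deduce the theorem from the Casimir-collapsed splitting of $\mathcal{L}_\eta\mathcal{L}_\eta g^f$ (Proposition 4.6 in the paper's numbering), with the ``if'' direction immediate and the ``only if'' direction obtained by specialising to purely horizontal and purely vertical test 1-forms to decouple the two summands. Your write-up is in fact slightly more careful than the paper's one-line argument, but the underlying idea is identical.
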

\begin{proof}
	The if" part is obvious. For the "only if part", let $\eta\in\Omega^1(\tilde{M})$ is 2-Killing 1-form. Putting $\eta=\eta_1^h$ and $\eta=\eta_2^v$ in Proposition 4.6 provide $(1)$ and $(2)$ respectively.
\end{proof}
Now, we will provide a theorem for 2-Killing 1-form. From (\cite{rfl},eqn. 2.5), Christoffel symbols $\Gamma_k^{ij}$ defined as
\begin{equation}
\mathcal{D}_{dx_i}dx_j=\Gamma_k^{ij}dx_k.
\end{equation}
\begin{theorem}
Let $(\mathbb{R}^2,g,\Pi)$ be a Riemannian Poisson manifold (where g is the cometric of the Riemannian metric $\tilde{g}=(dx^1)^2+(dx^2)^2$, $\Pi=\Pi^{12}\frac{\partial}{\partial x^1}\wedge\frac{\partial}{\partial x^2}$) and $\eta=\eta_1dx^1+\eta_2dx^2\in\Omega^1(\mathbb{R}^2)$. Then $\eta$ is 2- Killing form if and only if 
\begin{equation*}
2\mathcal{R}(\eta,dx^1,dx^2,\eta)=-\bigg(2(T_1T_3+T_2T_4)+\frac{\partial(T_5\Pi^{12})}{\partial x^1}+\frac{\partial(T_6\Pi^{12})}{\partial x^2}\bigg), 
\end{equation*} 
where
\begin{align*}
T_1&=\Pi^{12}\frac{\partial\eta_1}{\partial x^2}+\eta_2\frac{\partial\Pi^{12}}{\partial x^1},\quad T_2=\Pi^{12}\frac{\partial\eta_2}{\partial x^2}-\eta_1\frac{\partial\Pi^{12}}{\partial x^1},\\
T_3&=\Pi^{12}\frac{\partial\eta_1}{\partial x^1}-\eta_2\frac{\partial\Pi^{12}}{\partial x^2},\quad T_4=\Pi^{12}\frac{\partial\eta_2}{\partial x^1}+\eta_1\frac{\partial\Pi^{12}}{\partial x^2},\\
T_5&=\eta_1\Pi^{12}\frac{\partial\eta_1}{\partial x^2}-\eta_2\Pi^{12}\frac{\partial\eta_1}{\partial x^1}+\eta_1\eta_2\frac{\partial\Pi^{12}}{\partial x^1}+\eta_2^2\frac{\partial\Pi^{12}}{\partial x^2},\\
T_6&=\eta_2\Pi^{12}\frac{\partial\eta_2}{\partial x^1}-\eta_1\Pi^{12}\frac{\partial\eta_2}{\partial x^2}+\eta_1\eta_2\frac{\partial\Pi^{12}}{\partial x^2}+\eta_1^2\frac{\partial\Pi^{12}}{\partial x^1}.
\end{align*}
\end{theorem}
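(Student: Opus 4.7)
The plan is to specialize the polarized characterization \eqref{4.6} of a $2$-Killing $1$-form to the standard basis $\alpha = dx^1$, $\beta = dx^2$ on $\mathbb{R}^2$, and then identify the six quantities $T_1,\dots,T_6$ as exactly the coefficients that arise from computing the right-hand side.

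First I would unpack the Poisson data. Since $\Pi = \Pi^{12}\,\partial_1\wedge\partial_2$, the sharp map satisfies $\sharp_\Pi(dx^1) = \Pi^{12}\,\partial_2$ and $\sharp_\Pi(dx^2) = -\Pi^{12}\,\partial_1$. A short Cartan-formula calculation then gives $[dx^1,dx^2]_\Pi = d\Pi^{12}$ and $[dx^i,dx^i]_\Pi = 0$. Plugging these brackets and the flat cometric $g(dx^i,dx^j) = \delta^{ij}$ into the Koszul formula~(2.1) (whose first three terms vanish because $g(dx^a,dx^b)$ is constant) yields the Christoffel symbols
\[
\Gamma_2^{11} = -\Pi^{12}_{,1},\ \ \Gamma_1^{12} = \Pi^{12}_{,1},\ \ \Gamma_2^{21} = -\Pi^{12}_{,2},\ \ \Gamma_1^{22} = \Pi^{12}_{,2},
\]
with the other four vanishing; here $\Pi^{12}_{,i} = \partial\Pi^{12}/\partial x^i$.

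Next I would compute $\mathcal{D}_{dx^i}\eta$ by applying Leibniz in the second slot, $\mathcal{D}_\omega(f\gamma) = \sharp_\Pi(\omega)(f)\,\gamma + f\,\mathcal{D}_\omega\gamma$. A direct substitution shows
\[
\mathcal{D}_{dx^1}\eta = T_1\,dx^1 + T_2\,dx^2,\qquad \mathcal{D}_{dx^2}\eta = -T_3\,dx^1 - T_4\,dx^2,
\]
so that $2g(\mathcal{D}_{dx^1}\eta,\mathcal{D}_{dx^2}\eta) = -2(T_1T_3 + T_2T_4)$, producing the first piece of the claimed identity. For the remaining two terms in \eqref{4.6}, I would compute $\mathcal{D}_\eta\eta = \eta_1\,\mathcal{D}_{dx^1}\eta + \eta_2\,\mathcal{D}_{dx^2}\eta$, which upon substitution of $T_1,\dots,T_4$ collapses exactly to
\[
\mathcal{D}_\eta\eta = T_5\,dx^1 - T_6\,dx^2.
\]

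With $\mathcal{D}_\eta\eta$ in hand, the last step is to evaluate $g(\mathcal{D}_{dx^1}\mathcal{D}_\eta\eta, dx^2) + g(\mathcal{D}_{dx^2}\mathcal{D}_\eta\eta, dx^1)$. Applying Leibniz once more produces four terms of the form $\Pi^{12}\partial_j T_k$ plus four Christoffel contributions of the form $T_k\,\Pi^{12}_{,\ell}$. The cross terms $T_5\Pi^{12}_{,1} - T_6\Pi^{12}_{,2}$ and their negatives cancel in pairs, leaving
\[
-\partial_1(T_5\Pi^{12}) - \partial_2(T_6\Pi^{12}),
\]
and substituting everything into \eqref{4.6} gives the stated formula. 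The main obstacle is purely bookkeeping: keeping the signs straight in the Leibniz rule for $\mathcal{D}$, recognizing the $T_5,T_6$ combinations inside $\mathcal{D}_\eta\eta$, and regrouping the mixed derivative terms into the total-divergence form $\partial_1(T_5\Pi^{12}) + \partial_2(T_6\Pi^{12})$ at the end.
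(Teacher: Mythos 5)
Your proposal is correct and follows the paper's proof essentially step for step: you specialize the polarized identity $(4.6)$ to $\alpha=dx^1$, $\beta=dx^2$, obtain the same Christoffel symbols (you derive them from the Koszul formula $(2.1)$ where the paper quotes the coordinate formula $(4.11)$, but they agree), arrive at $\mathcal{D}_{dx^1}\eta=T_1dx^1+T_2dx^2$, $\mathcal{D}_{dx^2}\eta=-T_3dx^1-T_4dx^2$ and $\mathcal{D}_\eta\eta=T_5dx^1-T_6dx^2$ exactly as in $(4.14)$--$(4.16)$, and assemble the same final expression. The only quibble is wording at the last step: the Christoffel contributions and the $\Pi^{12}\partial_jT_k$ terms do not cancel but rather combine in pairs into the total derivatives $-\partial_1(T_5\Pi^{12})-\partial_2(T_6\Pi^{12})$, which is precisely the paper's $(4.18)$--$(4.19)$ and which you in fact state correctly in your closing sentence.
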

\begin{proof}
Since $\{dx^1,dx^2\}$ is orthonormal coframe field on $\mathbb{R}^2$ therefore $(4.6)$ implies that 
\begin{equation}
2\mathcal{R}(\eta,dx^1,dx^2,\eta)=2g(\mathcal{D}_{dx^1}\eta,\mathcal{D}_{dx^2}\eta)+g(\mathcal{D}_{dx^1}\mathcal{D}_\eta\eta,dx^2)+g(\mathcal{D}_{dx^2}\mathcal{D}_\eta\eta,{dx^1}).
\end{equation} 
The local components of $\tilde{g}$ are given by
\begin{eqnarray}
\left\{
\begin{array}{ll}
\tilde{g}_{11}=\tilde{g}(\frac{\partial}{\partial x^1},\frac{\partial}{\partial x^1})=1,\\
\tilde{g}_{22}=\tilde{g}(\frac{\partial}{\partial x^2},\frac{\partial}{\partial x^2})=1,\\
\tilde{g}_{12}=\tilde{g}(\frac{\partial}{\partial x^1},\frac{\partial}{\partial x^2})=0.
\end{array}
\right.
\end{eqnarray}
As $g$ is the cometric of the metric $\tilde{g}$ then its local components are given by
\begin{eqnarray}
\left\{
\begin{array}{ll}
g^{11}=g(dx^1,dx^1)=1,\\
g^{22}=g(dx^2,dx^2)=1,\\
g^{12}=g(dx^1,dx^2)=0.
\end{array}
\right.
\end{eqnarray}
Now, from (\cite{bp},eqn. 6.2), Christoffel symbols $\Gamma_k^{ij}$ (where $i,j,k\in\{1,2\}$) defined as
\begin{align}
\Gamma_k^{ij}&=\frac{1}{2}\sum_{l}\sum_{m}g_{mk}\Big(\Pi^{il}\frac{\partial g^{jm}}{\partial x_l}+\Pi^{jl}\frac{\partial g^{im}}{\partial x_l}-\Pi^{ml}\frac{\partial g^{ij}}{\partial x_l}-g^{li}\frac{\partial\Pi^{jm}}{\partial x_l}-g^{lj}\frac{\partial\Pi^{im}}{\partial x_l}\Big)\nonumber\\
&+\frac{1}{2}\frac{\partial\Pi^{ij}}{\partial x_k}.
\end{align}
Therefore from $(4.10)$ and $(4.11)$, we have 
\begin{align}
&\Gamma_1^{11}=0,\quad\Gamma_1^{12}=\frac{\partial\Pi^{12}}{\partial x^1},\quad\Gamma_1^{21}=0,\quad\Gamma_1^{22}=\frac{\partial\Pi^{12}}{\partial x^2},\nonumber\\
&\Gamma_2^{11}=-\frac{\partial\Pi^{12}}{\partial x^1},\quad\Gamma_2^{12}=0,\quad\Gamma_2^{21}=-\frac{\partial\Pi^{12}}{\partial x^2},\quad\Gamma_2^{22}=0.
\end{align}
Hence $(4.7)$ and $(4.12)$, conclude that
\begin{align}
&\mathcal{D}_{dx^1}dx^1=-\frac{\partial\Pi^{12}}{\partial x^1}dx^2,\quad\mathcal{D}_{dx^1}dx^2=\frac{\partial\Pi^{12}}{\partial x^1}dx^1,\nonumber\\
&\mathcal{D}_{dx^2}dx^1=-\frac{\partial\Pi^{12}}{\partial x^2}dx^2,\quad\mathcal{D}_{dx^2}dx^2=\frac{\partial\Pi^{12}}{\partial x^2}dx^1.
\end{align}  
By the properties of contravariant Levi-Civita connection $\mathcal{D}$ and equation $(4.13)$, we have
\begin{align}
\mathcal{D}_{dx^1}\eta&=T_1dx^1+T_2dx^2,\\
\mathcal{D}_{dx^2}\eta&=-T_3dx^1-T_4dx^2,\\
\mathcal{D}_{\eta}\eta&=T_5dx^1-T_6dx^2.
\end{align}
Equations $(4.14)$, $(4.15)$ and $(4.10)$, provides
\begin{equation}
g(\mathcal{D}_{dx^1}\eta,\mathcal{D}_{dx^2}\eta)=-T_1T_3-T_2T_4.
\end{equation}
Equations $(4.16)$ and $(4.10)$, provides
\begin{align}
g(\mathcal{D}_{dx^1}\mathcal{D}_{\eta}\eta,dx^2)&=-T_5\frac{\partial\Pi^{12}}{\partial x^1}-\Pi^{12}\frac{\partial T_6}{\partial x^2},\\
g(\mathcal{D}_{dx^1}\mathcal{D}_{\eta}\eta,dx^2)&=-T_6\frac{\partial\Pi^{12}}{\partial x^2}-\Pi^{12}\frac{\partial T_5}{\partial x^1}.
\end{align}
Using equations $(4.17)$, $(4.18)$ and $(4.19)$ in $(4.8)$, proves this result.
\end{proof}
\textbf{Appendix. Proof of Proposition 4.5}\\
Equation $(4.2)$, is given by
\begin{align}
(\mathcal{L}_\eta\mathcal{L}_\eta g)(\alpha,\beta)&=g(\mathcal{D}_\eta\mathcal{D}_\alpha\eta,\beta)+g(\mathcal{D}_\eta\mathcal{D}_\beta\eta,\alpha)\nonumber\\
&-g(\mathcal{D}_{[\eta,\alpha]_\Pi}\eta,\beta)-g(\mathcal{D}_{[\eta,\beta]_\Pi}\eta,\alpha)+2g(\mathcal{D}_\alpha\eta,\mathcal{D}_\beta\eta).
\end{align}
Using $(2.5)$ and Proposition 2.1 in the first term $P_1$ of $(4.20)$, we have
\begin{align*}
P_1&=g(\mathcal{D}_\eta\mathcal{D}_\alpha\eta,\beta)\\
&=g(\mathcal{D}_\eta\mathcal{D}_\alpha\eta,\beta_1^h)+g(\mathcal{D}_\eta\mathcal{D}_\alpha\eta,\beta_2^v).
\end{align*}
Assume that $S_1=\mathcal{D}_\eta\mathcal{D}_\alpha\eta$, therefore  
\begin{align*}
S_1&=\mathcal{D}_{{\eta_1}^h}\mathcal{D}_\alpha\eta+\mathcal{D}_{{\eta_2}^v}\mathcal{D}_\alpha\eta\\
&=(\mathcal{D}_{\eta_1}^1\mathcal{D}_{\alpha_1}^1\eta_1)^h+(\mathcal{D}_{\eta_2}^2\mathcal{D}_{\alpha_2}^2\eta_2)^v-\big(\frac{\mathcal{D}_{\eta_1}^1J_1df}{f^3}\big)^hg_2(\alpha_2,\eta_2)^v+\big(\frac{g_1(J_1df,\alpha_1)}{f}\big)^h(\mathcal{D}_{\eta_2}^2\eta_2)^v\\
&+\big(\frac{g_1(J_1df,\eta_1)}{f}\big)^h(\mathcal{D}_{\alpha_2}^2\eta_2+\mathcal{D}_{\eta_2}^2\alpha_2)^v+\Big[\big(\frac{3(\mathcal{D}_{\eta_1}^1f)-g_1(J_1df,\eta_1)}{f^4}\big)^hg_2(\alpha_2,\eta_2)^v\\
&+\big(\frac{g_1(J_1df,\alpha_1)}{f^4}\big)^h(||\eta_2||_2^2)^v-\frac{1}{(f^h)^3}g_2(\mathcal{D}_{\alpha_2}^2\eta_2,\eta_2)^v-\frac{1}{(f^h)^3}\big(\mathcal{D}_{\eta_2}^2g_2(\alpha_2,\eta_2)\big)^v\Big](J_1df)^h\\
&+\Big[\frac{g_1(J_1df,\eta_1)^2}{f^2}-\frac{(\mathcal{D}_{\eta_1}^1f)g_1(J_1df,\eta_1)}{f^2}+\frac{\mathcal{D}_{\eta_1}^1g_1(J_1df,\eta_1)}{f}\Big]^h\alpha_2^v+\Big[\big(\frac{\mathcal{D}_{\eta_1}^1g_1(J_1df,\alpha_1)}{f}\big)^h\\
&+\big(\frac{g_1(J_1df,\mathcal{D}_{\alpha_1}^1\eta_1)}{f}\big)^h+\big(\frac{g_1(J_1df,\alpha_1)g_1(J_1df,\eta_1)}{f^2}\big)^h-\big(\frac{(\mathcal{D}_{\eta_1}^1f)g_1(J_1df,\alpha_1)}{f^2}\big)^h\\
&-\big(\frac{||J_1df||_1^2}{f^4}\big)^hg_2(\alpha_2,\eta_2)^v\Big]\eta_2^v.
\end{align*}
Using $S_1$ in $P_1$, provides
\begin{align*}
P_1&=g_1(\mathcal{D}_{\eta_1}^1\mathcal{D}_{\alpha_1}^1\eta_1,\beta_1)^h+\frac{1}{(f^h)^2}g_2(\mathcal{D}_{\eta_2}^2\mathcal{D}_{\alpha_2}^2\eta_2,\beta_2)^v-\big(\frac{g_1(\mathcal{D}_{\eta_1}^1J_1df,\beta_1)}{f^3}\big)^hg_2(\alpha_2,\eta_2)^v\\
&+\big(\frac{g_1(J_1df,\eta_1)}{f^3}\big)^hg_2(\mathcal{D}_{\alpha_2}^2\eta_2+\mathcal{D}_{\eta_2}^2\alpha_2,\beta_2)^v+\big(\frac{g_1(J_1df,\alpha_1)}{f^3}\big)^hg_2(\mathcal{D}_{\eta_2}^2\eta_2,\beta_2)^v\\
&+\Big[\big(\frac{g_1(J_1df,\alpha_1)}{f^4}\big)^h(||\eta_2||_2^2)^v-\big(\frac{g_1(J_1df,\eta_1)}{f^4}\big)^hg_2(\eta_2,\alpha_2)^v+3\big(\frac{\mathcal{D}_{\eta_1}^1f}{f^4}\big)^hg_2(\alpha_2,\eta_2)^v\\
&-\frac{1}{(f^h)^3}g_2(\mathcal{D}_{\alpha_2}^2\eta_2,\eta_2)^v-\frac{1}{(f^h)^3}\big(\mathcal{D}_{\eta_2}^2g_2(\alpha_2,\eta_2)\big)^v\Big]g_1(J_1df,\beta_1)^h+\Big[\frac{g_1(J_1df,\eta_1)^2}{f^4}\\
&-\frac{(\mathcal{D}_{\eta_1}^1f)g_1(J_1df,\eta_1)}{f^4}+\frac{\mathcal{D}_{\eta_1}^1g_1(J_1df,\eta_1)}{f^3}\Big]^hg_2(\alpha_2,\beta_2)^v+\Big[\big(\frac{\mathcal{D}_{\eta_1}^1g_1(J_1df,\alpha_1)}{f^3}\big)^h\\
&+\big(\frac{g_1(J_1df,\mathcal{D}_{\alpha_1}^1\eta_1)}{f^3}\big)^h+\big(\frac{g_1(J_1df,\alpha_1)g_1(J_1df,\eta_1)}{f^4}\big)^h-\big(\frac{(\mathcal{D}_{\eta_1}^1f)g_1(J_1df,\alpha_1)}{f^4}\big)^h\\
&-\big(\frac{||J_1df||_1^2}{f^6}\big)^hg_2(\alpha_2,\eta_2)^v\Big]g_2(\eta_2,\beta_2)^v.
\end{align*}
After exchanging $\alpha$ and $\beta$ in the last equation provides the second term $P_2$ of $(4.20)$, is given by
\begin{align*}
P_2&=g(\mathcal{D}_\eta\mathcal{D}_\alpha\eta,\beta)\\
&=g_1(\mathcal{D}_{\eta_1}^1\mathcal{D}_{\beta_1}^1\eta_1,\alpha_1)^h+\frac{1}{(f^h)^2}g_2(\mathcal{D}_{\eta_2}^2\mathcal{D}_{\beta_2}^2\eta_2,\alpha_2)^v-\big(\frac{g_1(\mathcal{D}_{\eta_1}^1J_1df,\alpha_1)}{f^3}\big)^hg_2(\beta_2,\eta_2)^v\\
&+\big(\frac{g_1(J_1df,\eta_1)}{f^3}\big)^hg_2(\mathcal{D}_{\beta_2}^2\eta_2+\mathcal{D}_{\eta_2}^2\alpha_2,\alpha_2)^v+\big(\frac{g_1(J_1df,\beta_1)}{f^3}\big)^hg_2(\mathcal{D}_{\eta_2}^2\eta_2,\alpha_2)^v\\
&+\Big[\big(\frac{g_1(J_1df,\beta_1)}{f^4}\big)^h(||\eta_2||_2^2)^v-\big(\frac{g_1(J_1df,\eta_1)}{f^4}\big)^hg_2(\eta_2,\beta_2)^v+3\big(\frac{\mathcal{D}_{\eta_1}^1f}{f^4}\big)^hg_2(\beta_2,\eta_2)^v\\
&-\frac{1}{(f^h)^3}g_2(\mathcal{D}_{\beta_2}^2\eta_2,\eta_2)^v-\frac{1}{(f^h)^3}\big(\mathcal{D}_{\eta_2}^2g_2(\beta_2,\eta_2)\big)^v\Big]g_1(J_1df,\alpha_1)^h+\Big[\frac{g_1(J_1df,\eta_1)^2}{f^4}\\
&-\frac{(\mathcal{D}_{\eta_1}^1f)g_1(J_1df,\eta_1)}{f^4}+\frac{\mathcal{D}_{\eta_1}^1g_1(J_1df,\eta_1)}{f^3}\Big]^hg_2(\beta_2,\alpha_2)^v+\Big[\big(\frac{\mathcal{D}_{\eta_1}^1g_1(J_1df,\beta_1)}{f^3}\big)^h\\
&+\big(\frac{g_1(J_1df,\mathcal{D}_{\beta_1}^1\eta_1)}{f^3}\big)^h+\big(\frac{g_1(J_1df,\beta_1)g_1(J_1df,\eta_1)}{f^4}\big)^h-\big(\frac{(\mathcal{D}_{\eta_1}^1f)g_1(J_1df,\beta_1)}{f^4}\big)^h\\
&\big(\frac{||J_1df||_1^2}{f^6}\big)^hg_2(\beta_2,\eta_2)^v\Big]g_2(\eta_2,\alpha_2)^v.
\end{align*}
Again using $(2.5)$ and Proposition 2.1 in the third term $P_3$ of $(4.20)$, we have
\begin{align*}
P_3&=g(\mathcal{D}_{[\eta,\alpha]_\Pi}\eta,\beta)\\
&=g(\mathcal{D}_{[\eta,\alpha]_\Pi}\eta,\beta_1^h)+g(\mathcal{D}_{[\eta,\alpha]_\Pi}\eta,\beta_2^v).\\
\end{align*}
Assume that $S_2=\mathcal{D}_{[\eta,\alpha]_\Pi}\eta$, therefore
\begin{align*}
S_2&=\mathcal{D}_{[\eta,\alpha]_\Pi}\eta_1^h+\mathcal{D}_{[\eta,\alpha]_\Pi}\eta_2^v\\
&=\mathcal{D}_{[\eta_1,\alpha_1]_{\Pi_1}^h}\eta_1^h+\mathcal{D}_{[\eta_1,\alpha_1]_{\Pi_1}^h}\eta_2^v+\mathcal{D}_{[\eta_2,\alpha_2]_{\Pi_2}^v}\eta_1^h+\mathcal{D}_{[\eta_2,\alpha_2]_{\Pi_2}^v}\eta_2^v\\
&=(\mathcal{D}_{[\eta_1,\alpha_1]_{\Pi_1}}^1\eta_1)^h+(\mathcal{D}_{[\eta_2,\alpha_2]_{\Pi_2}}^2\eta_2)^v+\big(\frac{g_1(J_1df,\eta_1)}{f}\big)^h[\eta_2,\alpha_2]_{\Pi_2}^v\\
&+\big(\frac{g_1(J_1df,[\eta_1,\alpha_1]_{\Pi_1})}{f}\big)^h\eta_2^v-\big(\frac{J_1df}{f^3}\big)^hg_2(\eta_2,[\eta_2,\alpha_2]_{\Pi_2})^v.
\end{align*}
Using $S_2$ in $P_3$, provides
\begin{align*}
P_3&=g_1(\mathcal{D}_{[\eta_1,\alpha_1]_{\Pi_1}}^1\eta_1,\beta_1)^h+\frac{1}{(f^h)^2}g_2(\mathcal{D}_{[\eta_2,\alpha_2]_{\Pi_2}}^2\eta_2,\beta_2)^v\\
&+\big(\frac{g_1(J_1df,\eta_1)}{f^3}\big)^hg_2([\eta_2,\alpha_2]_{\Pi_2},\beta_2)^v+\big(\frac{g_1(J_1df,[\eta_1,\alpha_1]_{\Pi_1})}{f^3}\big)^hg_2(\eta_2,\beta_2)^v\\
&-\big(\frac{g_1(J_1df,\beta_1)}{f^3}\big)^hg_2(\eta_2,[\eta_2,\alpha_2]_{\Pi_2})^v.
\end{align*}
After exchanging $\alpha$ and $\beta$ in the last equation provides the fourth term $P_4$ of $(4.20)$, is given by
\begin{align*}
P_4&=g_1(\mathcal{D}_{[\eta_1,\beta_1]_{\Pi_1}}^1\eta_1,\alpha_1)^h+\frac{1}{(f^h)^2}g_2(\mathcal{D}_{[\eta_2,\beta_2]_{\Pi_2}}^2\eta_2,\alpha_2)^v\\
&+\big(\frac{g_1(J_1df,\eta_1)}{f^3}\big)^hg_2([\eta_2,\beta_2]_{\Pi_2},\alpha_2)^v+\big(\frac{g_1(J_1df,[\eta_1,\beta_1]_{\Pi_1})}{f^3}\big)^hg_2(\eta_2,\alpha_2)^v\\
&-\big(\frac{g_1(J_1df,\alpha_1)}{f^3}\big)^hg_2(\eta_2,[\eta_2,\beta_2]_{\Pi_2})^v.
\end{align*}
Same as above manipulations the fifth term $P_5$ of $(4.20)$, is given by
\begin{align*}
P_5&=g_1(\mathcal{D}_{\alpha_1}^1\eta_1,\mathcal{D}_{\beta_1}^1\eta_1)^h+\frac{1}{(f^h)^2}g_2(\mathcal{D}_{\alpha_2}^2\eta_2,\mathcal{D}_{\beta_2}^2\eta_2)^v+\big(\frac{g_1(J_1df,\alpha_1)}{f^3}\big)^hg_2(\mathcal{D}_{\beta_2}^2\eta_2,\eta_2)^v\\
&+\big(\frac{g_1(J_1df,\beta_1)}{f^3}\big)^hg_2(\mathcal{D}_{\alpha_2}^2\eta_2,\eta_2)^v+\big(\frac{||J_1df||_1^2}{f^6}\big)^hg_2(\alpha_2,\eta_2)^vg_2(\beta_2,\eta_2)^v\\
&+\big(\frac{g_1(J_1df,\alpha_1)g_1(J_1df,\beta_1)}{f^4}\big)^h(||\eta_2||_2^2)^v+\big(\frac{g_1(J_1df,\eta_1)^2}{f^4}\big)^hg_2(\alpha_2,\beta_2)^v\\
&+\Big[\frac{g_1(J_1df,\alpha_1)g_1(J_1df,\eta_1)}{f^4}-\frac{g_1(J_1df,\mathcal{D}_{\alpha_1}^1\eta_1)}{f^3}\Big]^hg_2(\beta_2,\eta_2)^v\\
&+\Big[\frac{g_1(J_1df,\beta_1)g_1(J_1df,\eta_1)}{f^4}-\frac{g_1(J_1df,\mathcal{D}_{\beta_1}^1\eta_1)}{f^3}\Big]^hg_2(\alpha_2,\eta_2)^v\\
&+\big(\frac{g_1(J_1df,\eta_1)}{f^3}\big)^h\big(g_2(\mathcal{D}_{\alpha_2}^2\eta_2,\beta_2)+g_2(\mathcal{D}_{\beta_2}^2\eta_2,\alpha_2)\big)^v.
\end{align*}
Using terms $P_1$, $P_2$, $P_3$, $P_4$ and $P_5$ in $(4.20)$ and after some manipulations provide the result.

\end{document}